\documentclass[9pt,a4paper]{article}
\usepackage{amsmath}
\usepackage{amsfonts}

\usepackage[]{amsthm,mathtools,braket,enumerate}
\usepackage[top=22truemm,bottom=28truemm,left=20truemm,right=20truemm]{geometry}
\usepackage{ascmac}
\usepackage{color}
\usepackage{amssymb}
\usepackage{bm}
\usepackage{setspace}
\usepackage{multicol}
\usepackage[a]{esvect}
\usepackage{fancybox}
\usepackage{hyperref,graphicx}
\usepackage{multirow}
\usepackage[all]{xy} 
\usepackage{enumerate}
\hypersetup{colorlinks=false,bookmarks=true,bookmarksnumbered=true,pdfborder={0 0 0},bookmarkstype=toc}
\usepackage{titlesec}
\usepackage{boites}
\usepackage{tikz}
\usetikzlibrary{intersections, calc, arrows.meta, graphs}
\tikzstyle{v} = [circle, draw, inner sep=2pt, minimum size=3pt, fill=black]

\newcommand{\maru}[1]{\raise0.2ex\hbox{\textcircled{\scriptsize{#1}}}}
\newcommand{\minimaru}[1]{\raise0.2ex\hbox{\textcircled{\tiny{#1}}}}

\newcommand{\bigset}[2]{\Bigl\{ \, {#1} \, \, \Bigr| \, \, {#2} \, \, \Bigr\}}

\newcommand{\G}{\mathcal{G}}
\newcommand{\bG}{{\bar{\mathcal{G}}}}
\newcommand{\Tref}[1]{(T-\ref{#1})}
\newcommand{\Cref}[1]{(C-\ref{#1})}

\newcommand{\priod}{.\,}
\newcommand{\ceq}{\coloneqq}

\newcommand{\BL}[1]{\textcolor{black}{{#1}}}
\newcommand{\T}{\mathcal{T}}
\newcommand{\C}{\mathcal{C}}

\newcommand{\resp}{resp\priod}

\titleformat*{\section}{\large\bfseries}
\titleformat*{\subsection}{\normalsize\bfseries}

\theoremstyle{plain} 
\newtheorem{theorem}{Theorem}[section]
\newtheorem{proposition}[theorem]{Proposition}
\newtheorem{lemma}[theorem]{Lemma}
\newtheorem{corollary}[theorem]{Corollary}

\theoremstyle{definition}
\newtheorem{definition}[theorem]{Definition}
\newtheorem{example}[theorem]{Example}

\title{Signed Graphs and Signed Cycles of Hyperoctahedral Groups}
\author{Ryo Uchiumi
\thanks{Department of Mathematics, Graduate School of Science, Hokkaido University, Sapporo, Hokkaido 060-0810, Japan. }}
\date{}
\begin{document}

\maketitle

\begin{abstract}

For a graph with edge ordering, a linear order on the edge set, we obtain a permutation of vertices by considering the edges as transpositions of endvertices. 
It is known from D\'enes' results that the permutation of a tree is a full cyclic for any edge ordering. 
As a corollary, D\'enes counted up the number of representations of a full cyclic permutation by means of product of the minimal number of transpositions.
Moreover, a graph with an edge ordering which the permutation is a full cyclic is characterized by graph embedding. 

In this article, we consider an analogy of these results for signed graphs and hyperoctahedral groups. We give a necessary and sufficient condition for a signed graph to have an edge ordering such that the permutation is an even (or odd) full cyclic. We show that the edge ordering of the signed tree with some loops always gives an even (or odd) full cyclic permutation and count up the number of representations of an odd full cyclic permutation by means of product of the minimal number of transpositions.
\end{abstract}


\fontsize{10pt}{15pt}\selectfont
\section{Introduction}
\subsection{background}

Let $n$ be a positive integer. Suppose that ${\mathcal{G}}=(V_\G,E_\G)$ is a graph and $V_\G=[n] \ceq \{1,\ldots,n\}$. We associate an edge $e=\{i,j\} \in E_\G$ with the transposition $\tau_e \ceq (i\ j) \in \mathfrak{S}_n$, where $\mathfrak{S}_n$ denotes the symmetric group of degree $n$.

An \textit{edge ordering} of a graph $\G$ $\leq_\omega$ is a linear order on $E_\G$, denoted as a sequence $\omega=(e_1,\ldots,e_m)$ in which $e_i \leq_\omega e_j$ if $i \leq j$. For an edge ordering $\omega=(e_1,\ldots,e_m)$, we give the product $\pi_\omega \ceq \tau_{e_m}\cdots\tau_{e_1} \in \mathfrak{S}_n$.

\begin{definition}
An edge ordering $\omega$ is a \textit{full cyclic permutation ordering} if the product $\pi_\omega \in \mathfrak{S}_n$ is a cyclic permutation of length $n$.
\end{definition}

We can characterize a graph with a full cyclic permutation ordering by studying the orbit and the sign of $\pi_\omega$.

\begin{proposition}
If a graph $\G$ has a full cyclic permutation ordering, then $\G$ is connected and the Betti number $\beta(\G) \ceq |E_\G|-|V_\G|+1$ is even.

\end{proposition}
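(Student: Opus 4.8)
The plan is to prove the two assertions separately, handling connectedness by an orbit argument and the parity of the Betti number by a sign computation.

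First, for connectedness, I would observe that each transposition $\tau_e = (i\ j)$ associated with an edge $e = \{i,j\}$ only interchanges two endpoints lying in the same connected component of $\G$, and fixes every other vertex. Consequently the partition of $V_\G = [n]$ into connected components is preserved by every generator $\tau_e$, and hence by their product $\pi_\omega$: every orbit of $\pi_\omega$ is contained in a single connected component. Since $\pi_\omega$ is assumed to be a cyclic permutation of length $n$, it is transitive on $[n]$, so its unique orbit is all of $[n]$. Therefore $\G$ has exactly one connected component, i.e. $\G$ is connected. (In particular $\G$ has no isolated vertex, which is in any case already forced, for $n \geq 2$, by the fact that an $n$-cycle has no fixed point.)

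Second, for the parity of $\beta(\G)$, I would compare signs. By definition $\pi_\omega = \tau_{e_m}\cdots\tau_{e_1}$ is a product of $m = |E_\G|$ transpositions, so $\operatorname{sgn}(\pi_\omega) = (-1)^{|E_\G|}$. On the other hand, a cyclic permutation of length $n$ is a product of $n-1$ transpositions, hence its sign is $(-1)^{n-1} = (-1)^{|V_\G|-1}$. Equating the two expressions gives $(-1)^{|E_\G|} = (-1)^{|V_\G|-1}$, whence $|E_\G| \equiv |V_\G| - 1 \pmod{2}$. This is precisely the statement that $\beta(\G) = |E_\G| - |V_\G| + 1$ is even.

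The argument is short and essentially self-contained; the only point that deserves care is the first step, where one must justify that the block structure given by the connected components is genuinely invariant under each generator $\tau_e$, and therefore under the whole product $\pi_\omega$. The sign computation is then immediate once one recalls the standard sign of an $n$-cycle, and I do not anticipate any serious obstacle beyond making the orbit-preservation argument precise.
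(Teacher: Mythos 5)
Your proof is correct and follows exactly the route the paper intends: the paper states this proposition without a written proof but prefaces it with ``studying the orbit and the sign of $\pi_\omega$,'' which is precisely your two-step argument (orbits of $\pi_\omega$ are confined to connected components, so transitivity of an $n$-cycle forces connectedness; and $\operatorname{sgn}(\pi_\omega)=(-1)^{|E_\G|}=(-1)^{|V_\G|-1}$ forces $\beta(\G)$ even). No gaps; your filled-in details are sound.
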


For graphs with full cyclic permutation ordering, the following theorem by D\'enes is known.
\begin{theorem}[D\'enes {\cite[\S{2}, Theorem 1]{denes1959representation-potmiothaos}}. See also {\cite[\S{2}, Lemma]{moszkowski1989solution-ejoc}},  {\cite[\S{2}, Lemma 2.1]{pawlowski2022chromatic-ac}}]\label{t11}\label{t12}\label{denes}
Given a graph $\G$, the following are equivalent.
\begin{enumerate}[(i)]
\item Any edge ordering of $\G$ is a  full cyclic permutation ordering.
\item $\mathcal{G}$ is a tree.
\end{enumerate}
\end{theorem}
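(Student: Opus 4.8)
The plan is to prove the two implications separately; (ii)$\Rightarrow$(i) is a direct structural induction, while (i)$\Rightarrow$(ii) is handled in contrapositive form and contains the only real difficulty.

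For (ii)$\Rightarrow$(i), I would fix an arbitrary edge ordering $\omega=(e_1,\dots,e_{n-1})$ of a tree $\G$ and track the partial products $\pi_k \ceq \tau_{e_k}\cdots\tau_{e_1}$ against the partial forests $F_k \ceq \{e_1,\dots,e_k\}$. The key claim, proved by induction on $k$, is that the supports of the cycles of $\pi_k$ are exactly the vertex sets of the connected components of $F_k$. The base case $k=0$ is the identity with its $n$ fixed points. For the inductive step, since $\G$ is a tree the edge $e_{k+1}=\{a,b\}$ joins two distinct components of $F_k$, so $a$ and $b$ lie in distinct cycles of $\pi_k$; left multiplication by $\tau_{e_{k+1}}=(a\ b)$ therefore merges precisely those two cycles, matching the merge of the two components. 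At $k=n-1$ the forest is all of $\G$, hence connected, so $\pi_{n-1}$ is a single $n$-cycle, and every ordering is full cyclic.

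For (i)$\Rightarrow$(ii) I would argue the contrapositive: if $\G$ is not a tree it is either disconnected or connected with a cycle, and in each case I exhibit one ordering that is not full cyclic. If $\G$ is disconnected then every $\tau_e$ preserves the partition of $[n]$ into components, so $\pi_\omega$ is never transitive and no ordering is full cyclic. If $\G$ is connected and contains a cycle, choose an edge $e=\{a,b\}$ lying on a cycle; then $e$ is not a bridge, so $H \ceq (V_\G, E_\G\setminus\{e\})$ is still connected. The aim is to order the edges of $H$ so that $a$ and $b$ end up in the same cycle of the resulting permutation $\rho$, and then append $e$ last: since $a$ and $b$ share a cycle of $\rho$, the factor $\tau_e=(a\ b)$ splits that cycle, so $\pi_\omega=(a\ b)\rho$ has strictly more than one cycle and is not an $n$-cycle.

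The whole argument therefore rests on one lemma, which I expect to be the main obstacle: for any connected graph $H$ on $[n]$ and any two vertices $a\neq b$ there is an edge ordering with $a$ and $b$ in a common cycle of the product. I would prove the stronger statement that one can arrange $\pi(a)=b$, since that is the invariant which survives the induction. The reduction from $H$ to a spanning tree is then easy: repeatedly delete a non-bridge edge $g=\{c,d\}$ chosen with $a\notin\{c,d\}$ (a simple cycle of $H$ always offers such an edge, as at most two of its edges meet $a$), obtain an ordering of $H-g$ with $\pi''(a)=b$ by induction, and prepend $g$; because $(c\ d)$ fixes $a$, the identity $\pi(a)=\pi''((c\ d)(a))=\pi''(a)=b$ is preserved. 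The base case is a tree, treated by induction on the number of vertices: if there is a leaf $\ell\notin\{a,b\}$ with neighbour $w$, delete it, get $\pi''(a)=b$ on the smaller tree, and reinsert the edge $\{\ell,w\}$ either before the word (when $w\neq a$) or after it (when $w=a$), checking in each case that $a$ is still carried to $b$; the remaining case is a path from $a$ to $b$, where ordering the edges consecutively from the $a$-end walks the token at $a$ step by step to $b$. The delicate point is exactly this leaf bookkeeping—guaranteeing that the reinserted transposition never disturbs the image of $a$—which is why I would carry the sharper claim $\pi(a)=b$ throughout rather than the weaker "same cycle" formulation.
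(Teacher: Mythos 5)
Your proposal is correct, but it takes a genuinely different route from the argument in the paper. (The paper itself only cites D\'enes for Theorem \ref{denes}; its closest in-house proof is of the signed analogue, Theorem \ref{t15}, which restricts to the unsigned case.) For (ii)$\Rightarrow$(i) the two arguments share the same mechanism --- a transposition joining two disjoint cycles merges them --- but package it differently: the paper inducts on the number of vertices, deleting the first edge $e_1$ of the ordering to split the tree into two subtrees whose restricted orderings yield disjoint full cycles, then glues them via $\pi_\omega=\pi_{\omega_1}\pi_{\omega_2}\tau_{e_1}$; your prefix invariant (the cycle supports of $\pi_k$ equal the components of the partial forest $F_k$) runs the induction along the ordering instead, which avoids the restriction-and-commutation bookkeeping. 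The real divergence is in (i)$\Rightarrow$(ii). The paper's contrapositive takes a \emph{minimal} cycle $v_1,\ldots,v_l$ and exhibits one explicit ordering $\omega_0=(c_1,\ldots,c_{l-1},d_1,\ldots,d_t,c_l,b_1,\ldots,b_s)$ for which a direct five-line computation shows $\pi_{\omega_0}(v_1)=v_1$, minimality being used only to guarantee that no edge at $v_l$ outside the cycle reaches $v_1$; it never needs anything like your key lemma. You instead delete a non-bridge edge $e=\{a,b\}$, prove that any connected graph admits an ordering whose product sends $a$ to $b$ (your nested inductions are sound: a simple cycle has length at least $3$, so it contains an edge avoiding $a$, and carrying the sharper invariant $\pi(a)=b$ rather than ``same cycle'' is exactly what makes the leaf reinsertion cases $w=a$ and $w\neq a$ go through), and then append $(a\ b)$ to split the cycle containing $a$ and $b$. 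The paper's construction buys brevity --- a fixed point is produced outright, with no auxiliary lemma --- while yours buys a stronger, reusable transitivity statement (prescribing $\pi_\omega(a)$ on any connected graph) and makes the merge/split symmetry of the transposition action transparent; both are complete proofs.
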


D\'enes gives this theorem to count up the number of representations of a cyclic permutation of length $n$ by means of a product of a minimal number of transpositions, and obtains the following corollary.

\begin{corollary}[D\'enes {\cite[\S{2}, Corollary]{denes1959representation-potmiothaos}}]\label{c13}\label{denescor}
The number of representations of a cyclic permutation of length $n$ by means of a product of $n-1$ transpositions is $n^{n-2}$.
\end{corollary}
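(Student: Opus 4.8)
The plan is to set up a bijection between representations of full $n$-cycles as products of $n-1$ transpositions and edge-ordered trees on the vertex set $[n]$, and then to count the latter using Cayley's formula together with the conjugation symmetry of $\mathfrak{S}_n$. First I would note that a cyclic permutation of length $n$ has sign $(-1)^{n-1}$, so $n-1$ transpositions is compatible in parity; that $n-1$ is in fact minimal will drop out of the connectedness argument below. Fix such a representation $\pi = \tau_{e_{n-1}}\cdots \tau_{e_1}$, where each $\tau_{e_i}=(a_i\ b_i)$, and regard the data as a graph $\G$ on $V_\G=[n]$ whose edge sequence is $\omega=(e_1,\ldots,e_{n-1})$, so that $\pi=\pi_\omega$.

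The key structural step is to show that $\G$ must be a tree. Since $\pi$ is a full $n$-cycle it is transitive on $[n]$; as $\pi$ lies in the subgroup of $\mathfrak{S}_n$ generated by the $\tau_{e_i}$, whose orbits are exactly the connected components of $\G$, transitivity forces $\G$ to be connected. A connected graph on $n$ vertices has at least $n-1$ distinct edges, yet $\G$ carries only $n-1$ edges counted with multiplicity; hence all the $e_i$ are distinct and $\G$ is a (simple) spanning tree. Conversely, by Theorem \ref{denes} every edge ordering of a tree on $[n]$ is a full cyclic permutation ordering, so each edge-ordered tree produces a full $n$-cycle $\pi_\omega$. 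Since a transposition and an edge determine one another, these two constructions are mutually inverse, yielding a bijection between the set of all representations (of an arbitrary full $n$-cycle) by $n-1$ transpositions and the set of edge-ordered trees on $[n]$.

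To finish, I would count edge-ordered trees: by Cayley's formula there are $n^{n-2}$ trees on $[n]$, each with $(n-1)!$ edge orderings, for a total of $n^{n-2}(n-1)!$. Finally, conjugation by any $g\in\mathfrak{S}_n$ sends a representation $\tau_{e_{n-1}}\cdots\tau_{e_1}$ of $\pi$ to a representation $(g\tau_{e_{n-1}}g^{-1})\cdots(g\tau_{e_1}g^{-1})$ of $g\pi g^{-1}$, each factor again a transposition; as all full $n$-cycles are conjugate, each of the $(n-1)!$ cyclic permutations of length $n$ admits the same number of representations. Dividing the total by the number of full cycles gives $n^{n-2}(n-1)!/(n-1)! = n^{n-2}$.

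I expect the main obstacle to be the reduction to trees, namely arguing simultaneously that the support graph is connected (from transitivity of the $n$-cycle) and free of repeated edges, which together are exactly what forces the minimal-length representations to coincide with spanning trees; once that bijection is in place, the remainder is bookkeeping with Cayley's formula and a standard conjugation-orbit argument.
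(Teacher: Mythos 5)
Your proposal is correct and takes essentially the same route as the paper: representations of a full cycle by $n-1$ transpositions are identified with edge-ordered trees via Theorem \ref{denes}, counted by Cayley's formula as $n^{n-2}(n-1)!$, and distributed equally over the $(n-1)!$ full cycles by conjugacy — this is precisely D\'enes' argument as cited, and the same counting scheme the paper replays for the signed analogue in Corollary \ref{c16}(B) (there with the sets $X$, $Y$, $Z$, signed trees with one loop, and the factor $2^{n-1}$). No gaps; your tree-reduction step (connectedness from transitivity plus the edge count forcing distinctness) is exactly the needed bijection.
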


The author and Tsujie \cite{tsujie2022upper} also characterize graphs having a full cyclic permutation ordering in terms of graph embedding.


\subsection{The hyperoctahedral group}

In this article, we discuss these arguments in the case of the hyperoctahedral group $\mathfrak{H}_n$, the Weyl group of type $B_n$.  It is a natural idea since $\mathfrak{S}_n$ is known as the Weyl group of type $A_{n-1}$.

Let $I_n \ceq \{-n,\ldots,-1,1,\ldots,n\}$.  The \textit{hyperoctahedral group} $\mathfrak{H}_n$ is a subgroup of the symmetric group $\mathfrak{S}_{I_n}$ of $I_n$, defined by 
\[
\mathfrak{H}_n \ceq \bigset{\eta \in \mathfrak{S}_{I_n}}{\text{$\eta(-i)=-\eta(i)$ for all $i \in I_n$}}.
\]
An element $\eta \in \mathfrak{H}_n$, called a \textit{signed permutation}, is expressed as
\[
\eta=\begin{pmatrix}
1&2&\cdots&n\\
\eta(1)&\eta(2)&\cdots&\eta(n)
\end{pmatrix}
\]
since $\eta$ is determined by values at integers in $[n]$.
In particular, given a permutation in $\mathfrak{S}_n$, the signed permutation in $\mathfrak{H}_n$ is naturally determined. Thus we can consider $\mathfrak{S}_n$ to be a subgroup of $\mathfrak{H}_n$.

The hyperoctahedral groups are well studied (cf. {\cite{borovik2010mirrors,carter1972conjugacy-cm,humphreys1990reflection,kerber1971representations,kerber1975representations,young1930quantitative-potlms-1}}). Here we recall some that are necessary.

There are three types of \textit{signed transpositions} in $\mathfrak{H}_n$. For $i,j$ be distinct numbers in $[n]$ and $\epsilon \in \{+,-\}$, which we consider as a multiplicative group of order two in the natural way, we say that
\[
(i\ \epsilon j) \ceq \begin{pmatrix}
1&\cdots&i&\cdots&j&\cdots&n\\
1&\cdots&\epsilon j&\cdots&\epsilon i&\cdots&n
\end{pmatrix}
\]
is a \textit{positive transposition} if $\epsilon=+$ and a \textit{negative transposition} if $\epsilon=-$.  Note that we can consider a positive transposition  $(i\ {+j})$ as a transposition in $\mathfrak{S}_n$ and abbreviate it as $(i\ j)$. For $i \in [n]$, we say that
\[
(i\ {-i}) \ceq \begin{pmatrix}
1&\cdots&i&\cdots&n\\
1&\cdots&{-i}&\cdots&n
\end{pmatrix}
\]
is an \textit{inversion transposition}. 

The finite group $\mathfrak{H}_n$ is generalized by these signed transpositions, but only some positive transpositions and one inversion transposition are sufficient for the generator. Also, The group $\mathfrak{H}_n$ is isomorphic to the semidirect product $\mathfrak{S}_2^n \rtimes \mathfrak{S}_n$ (the wreath product $\mathfrak{S}_2 \wr \mathfrak{S}_n$). The group $\mathfrak{S}_2^n$ is regarded as the group generalized by inversion transpositions. 
Let $\eta \in \mathfrak{H}_n$, using the relation for signed transpositions given below, we can find $\sigma_1 \in  \mathfrak{S}_2^n$ and $\sigma_2 \in \mathfrak{S}_n$ such that $\eta=\sigma_1\sigma_2$ (more on this later Example $\ref{example}$).

\begin{lemma}\label{Tlemma}
Let $i,j,k \in [n]$ be distinct integers and $\epsilon \in \{+,-\}$.  Then the following three claims hold:
\begin{enumerate}[(T-1)]
\item $(i\ {-j})=(i\ {-i})(j\ {-j})(i\ {j})$; \label{T1}
\item $(i\ \epsilon{j})(j\ {-j})=(i\ {-i})(i\ \epsilon{j})$; \label{T2}
\item $(i\ \epsilon{j})(k\ {-k})=(k\ {-k})(i\ \epsilon{j})$. \label{T3}
\end{enumerate}
\end{lemma}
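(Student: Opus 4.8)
The plan is to prove all three identities by direct evaluation, since each asserts the equality of two elements of the finite group $\mathfrak{H}_n$, and such an equality holds precisely when both sides agree as functions on $I_n$. Because every signed permutation $\eta$ satisfies $\eta(-i)=-\eta(i)$, it suffices to compare the two sides on the positive indices $\{1,\ldots,n\}$. Moreover, each transposition occurring fixes every index outside $\{i,j,-i,-j\}$ (or $\{k,-k\}$), so I in fact only need to track the images of $i$ and $j$ (and $k$ in (T-3)), every other index being fixed by both sides automatically. Throughout I read products right-to-left, consistently with the convention $\pi_\omega=\tau_{e_m}\cdots\tau_{e_1}$ fixed in the introduction, so that in a product the rightmost factor acts first.

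First I would dispatch (T-3), which is the cleanest. Since $i,j,k$ are pairwise distinct, the support $\{i,j,-i,-j\}$ of $(i\ \epsilon j)$ is disjoint from the support $\{k,-k\}$ of $(k\ {-k})$; disjoint permutations commute, so the identity follows at once with no sign bookkeeping.

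For (T-1) I would evaluate the right-hand side $(i\ {-i})(j\ {-j})(i\ j)$ on $i$ and on $j$. Applying it to $i$: the factor $(i\ j)$ sends $i$ to $j$, then $(j\ {-j})$ sends $j$ to $-j$, which $(i\ {-i})$ fixes, giving $i\mapsto -j$; applying it to $j$: $(i\ j)$ sends $j$ to $i$, which $(j\ {-j})$ fixes, then $(i\ {-i})$ sends $i$ to $-i$, giving $j\mapsto -i$. These agree with $(i\ {-j})\colon i\mapsto -j,\ j\mapsto -i$, proving (T-1). For (T-2) I would run the same bookkeeping for both signs at once by carrying $\epsilon$ through: on the left-hand side $(i\ \epsilon j)(j\ {-j})$ one gets $i\mapsto \epsilon j$ and $j\mapsto -\epsilon i$, and on the right-hand side $(i\ {-i})(i\ \epsilon j)$ one gets the same, the only delicate point being that $(i\ \epsilon j)$ sends $-j$ to $-\epsilon i$ and that $(i\ {-i})$ sends $\epsilon i$ to $-\epsilon i$.

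Since everything reduces to finitely many substitutions, there is no genuine obstacle; the sole place to slip is the sign-and-order bookkeeping, so the only hard part is disciplined tabulation, which I would organize by recording, for each identity, the image of $i$ and of $j$ under both sides on a single line each.
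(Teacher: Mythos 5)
Your proposal is correct: the reduction to checking images of positive indices is valid because both sides of each identity are products of elements of $\mathfrak{H}_n$ and hence satisfy $\eta(-i)=-\eta(i)$, and your case-by-case evaluations (including the delicate steps $-j\mapsto-\epsilon i$ under $(i\ \epsilon j)$ and $\epsilon i\mapsto-\epsilon i$ under $(i\ {-i})$ in (T-2), and the disjoint-support argument for (T-3)) all check out under the paper's right-to-left convention. The paper itself states this lemma without proof, treating it as a routine verification, and your elementwise computation is precisely the standard argument it implicitly leaves to the reader.
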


A signed permutation is decomposed uniquely into a product of commuting signed cycles. Let $i_1,\ldots,i_l$ be distinct numbers in $[n]$ and $\epsilon_1,\ldots,\epsilon_l \in \{+,-\}$, given such a \textit{signed cycle} of length $l$
\begin{align}
\sigma=\begin{pmatrix}
i_1&i_2&\cdots&i_{l-1}&i_l\\
\epsilon_1i_2&\epsilon_2i_3&\cdots&\epsilon_{l-1}i_l&\epsilon_li_1\label{s1}
\end{pmatrix},
\end{align}
we say that $\sigma$ is an \textit{even $l$-cycle} if $\epsilon_1\cdots\epsilon_l=+$ and an \textit{odd $l$-cycle} if $\epsilon_1\cdots\epsilon_l=-$. In the case of $l=n$, the signed cycle $\sigma$ is called an \textit{even} (resp\priod \textit{odd}) \textit{full cyclic permutation}. Note that a cyclic permutation in $\mathfrak{S}_n$ is an even cyclic permutation in $\mathfrak{H}_n$. In addition, formula $\eqref{s1}$ is written as
\begin{align}
\sigma=\bigl(i_1\ \varepsilon_1i_2\ \varepsilon_2i_3\ \cdots\cdots\ \varepsilon_{l-1}i_l\bigr)_{\varepsilon_l},\label{s2}
\end{align}
where $\varepsilon_i \ceq \epsilon_1\cdots\epsilon_i$. Also, we obtain the representation of an even cycle as the product of transpositions as follows:
\begin{align}
\bigl(i_1\ \varepsilon_1i_2\ \varepsilon_2i_3\ \cdots\cdots\ \varepsilon_{l-1}i_l\bigr)_+ = (i_1\ \epsilon_1i_2)(i_2\ \epsilon_2i_3) \cdots\cdots (i_{l-1}\ \epsilon_{l-1}i_l).\label{s3}
\end{align}
We know the following relation for signed cycles and inversion transpositions.

\begin{lemma}\label{l23}\label{Clemma}
Suppose that $i_1,\ldots,i_l \in I_n$ have different absolute values and $\varepsilon \in \{+,-\}$.  Then the following three claims hold:
\begin{enumerate}[(C-1)]
\item $\bigl(i_1\ i_2\ \cdots\ i_l\bigr)_\varepsilon=\bigl(i_2\ \cdots\ i_l\ \varepsilon i_1\bigr)_\varepsilon$; \label{C1}
\item $(i_1\,{-i_1})\bigl(i_1\ \cdots\ i_l\bigr)_\varepsilon=\bigl(i_1\ \cdots\ i_l\bigr)_\varepsilon(i_l\ {-}i_l) = \bigl(i_1\ \cdots\ i_l\bigr)_{-\varepsilon}$. \label{C2}

\end{enumerate}
In other words, multiplying a signed cycle by one appropriate inversion transposition changes the even-oddness.

\end{lemma}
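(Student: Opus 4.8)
The plan is to argue directly from the explicit action of a signed cycle on $I_n$, rather than through the transposition identities of Lemma \ref{Tlemma}. First I would unwind \eqref{s2} to record that $\sigma \ceq \bigl(i_1\ i_2\ \cdots\ i_l\bigr)_\varepsilon$ is the unique signed permutation sending $i_k \mapsto i_{k+1}$ for $1 \le k \le l-1$ and $i_l \mapsto \varepsilon i_1$, with the values on negative arguments forced by $\sigma(-x) = -\sigma(x)$ and with every integer of absolute value outside $\{|i_1|,\ldots,|i_l|\}$ fixed. Because the $i_k$ have pairwise distinct absolute values, all signed permutations occurring in the lemma are supported on $\{\pm i_1,\ldots,\pm i_l\}$, so it suffices to compare their values on each $i_k$.

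For (C-\ref{C1}) I would check that the two one-line expressions induce the same map. The right-hand cycle $\bigl(i_2\ \cdots\ i_l\ \varepsilon i_1\bigr)_\varepsilon$ visibly sends $i_2 \mapsto i_3 \mapsto \cdots \mapsto i_l \mapsto \varepsilon i_1$, and its final entry wraps to $\varepsilon\cdot i_2$; applying $\sigma(-x)=-\sigma(x)$ to the argument $\varepsilon i_1$ turns the relation $\varepsilon i_1 \mapsto \varepsilon i_2$ into $i_1 \mapsto i_2$. Hence both sides agree on every $i_k$, which is precisely the cyclic-shift invariance of the notation, the sign $\varepsilon$ being transported to the end by the twist at the last entry.

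For (C-\ref{C2}) I would evaluate the two products on each $i_k$ under the convention $(fg)(x) = f(g(x))$. In $(i_1\ {-i_1})\,\sigma$ the inversion transposition fixes each image $i_{k+1}$ with $k+1 \le l$, since these differ in absolute value from $i_1$; the only interaction occurs at $i_l$, where $\sigma(i_l) = \varepsilon i_1$ is carried by $(i_1\ {-i_1})$ to $-\varepsilon i_1$, using that the inversion transposition acts linearly in the sign. Thus the product realizes the chain $i_1 \mapsto \cdots \mapsto i_l \mapsto -\varepsilon i_1$, namely $\bigl(i_1\ \cdots\ i_l\bigr)_{-\varepsilon}$. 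Symmetrically, in $\sigma\,(i_l\ {-i_l})$ the transposition acts first, fixing $i_1,\ldots,i_{l-1}$ and sending $i_l \mapsto -i_l$, after which $\sigma(-i_l) = -\varepsilon i_1$; the net map is again the chain ending in $-\varepsilon i_1$, so the three expressions coincide.

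The computations are routine, so the only point needing care is the bookkeeping of signs together with the composition order: one must verify that the inversion transposition meets the cycle at exactly the single place carrying the sign twist (at $i_l$ for the right factor, and at $i_l$ before $\sigma$ acts for the left factor) and commutes through harmlessly everywhere else by the distinct-absolute-value hypothesis. An alternative route would derive (C-\ref{C2}) from the representation \eqref{s3} together with the relations (T-\ref{T1})--(T-\ref{T3}) of Lemma \ref{Tlemma}, but the direct evaluation on $I_n$ keeps the role of the hypotheses transparent and is shorter.
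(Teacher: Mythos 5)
Your proof is correct. One thing to note up front: the paper itself gives no proof of this lemma --- it is stated as known background on $\mathfrak{H}_n$ (``We know the following relation\ldots''), with the surrounding citations to the standard literature on hyperoctahedral groups. So your proposal supplies an argument the paper omits, and it is the natural one: unwind the one-line notation \eqref{s1}--\eqref{s2} into the defining map $a_k \mapsto a_{k+1}$, $a_l \mapsto \varepsilon a_1$, extended by $\sigma(-x)=-\sigma(x)$ and fixing everything of absolute value outside $\{|i_1|,\ldots,|i_l|\}$, then compare both sides pointwise. Your sign bookkeeping checks out in both parts: in (C-\ref{C1}) the wrap-around relation $\varepsilon i_1 \mapsto \varepsilon i_2$ does collapse to $i_1 \mapsto i_2$ by oddness regardless of $\varepsilon$, and in (C-\ref{C2}) you correctly use the composition convention $(fg)(x)=f(g(x))$ that the paper also uses (compare the right-to-left evaluation of $\pi_{\omega_0}(v_1)$ in Section~\ref{sec3}), so that $(i_1\ {-i_1})$ touches the cycle only at the image $\sigma(i_l)=\varepsilon i_1$, while $(i_l\ {-i_l})$ touches it only at the argument $i_l$ before $\sigma$ acts, with $\sigma(-i_l)=-\varepsilon i_1$; the distinct-absolute-value hypothesis is exactly what makes the transposition pass through all other entries. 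Your alternative suggestion --- deriving (C-\ref{C2}) from \eqref{s3} together with (T-\ref{T1})--(T-\ref{T3}) --- would also work and would tie the lemma more tightly to Lemma~\ref{Tlemma}, but as you say it is longer, and the direct evaluation makes the hypotheses' role clearer; it also silently confirms the small notational point that the lemma's entries $i_1,\ldots,i_l$ are allowed to lie in $I_n$ rather than $[n]$, which your interpretation of the cycle notation handles correctly.
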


The representation of a signed permutation as the product of signed cycles determines the conjugacy class in $\mathfrak{H}_n$. The conjugacy class of $\mathfrak{H}_n$ are parameterized by pair of two integer partition $(\lambda,\mu)$ such that $|\lambda|+|\mu|=n$, 
and those containing even and odd $n$-cycle are represented by $((n),0)$ and $(0,(n))$, respectively.

\subsection{Signed graph}

We introduce the signed graph to consider an analogy of D\'enes' results for $\mathfrak{H}_n$. 
Here, define a \textit{signed graph} as a quadruple $\mathcal{G}=(V_\G,E^+_\G,E^-_\G,L_\G)$, where $V_\G=[n]$,\ $E^+_\G$ and $E^-_\G$ are collections consisting of unordered pairs of elements in $V_\G$, and $L_\G$ is a subset of $V_\G$. 
An element in $V_\G$ is called a \textit{vertex}, an element in $E^+_\G$ (resp\priod $E^-_\G$) is called a \textit{positive} (resp\priod \textit{negative}) \textit{edge}, and an element in $L_\G$ is called a \textit{loop}. Denote the set $E_\G^+ \sqcup E_\G^- \sqcup L_\G$ by $E_\G$, whose element is called an \textit{edge}.

For a signed graph $\G=(V_\G,E^+_\G,E^-_\G,L_\G)$, let $\bG=(V_\bG,E_\bG)$ denote an unsigned multigraph such that $V_\bG=V_\G,\ E_\bG=E_\G^+ \sqcup E_\G^-$.
A \textit{signed tree} is a graph $\T=(V_\T,E_\T^+,E_\T^-,L_\T)$ such that $E_\T^+ \cap E_\T^-=L_\T=\emptyset$ and the unsigned graph $\bar{\T}$ is a tree. 

For every edge $e \in E_\G$, define a signed transposition $\tau_e \in \mathfrak{H}_n$ by
\[
\tau_e \ceq \begin{cases}
(i\ {j})&\text{if } e=\{i,j\} \in E^+_\G,\\
(i\ {-j})&\text{if } e=\{i,j\} \in E^-_\G,\\
(i\ {-i})&\text{if } e=i \in L_\G.
\end{cases}
\]

Let $\omega=(e_1,\ldots,e_m)$ be an edge ordering of a signed graph $\G$ (a linear order on $E_\G$). We give a product $\pi_\omega \ceq \tau_{e_m}\cdots\tau_{e_1} \in \mathfrak{H}_n$.

\begin{definition}
An edge ordering of $\G$ $\omega$ is an \textit{even} (resp\priod \textit{odd}) \textit{full cyclic permutation ordering} if $\pi_\omega$ is an even (resp\priod odd) full cyclic permutation.
\end{definition}

\subsection{Main results}
The main results are as follows:

\begin{theorem}\label{nsc}
\BL{Given a signed graph $\G$, the following are equivalent.
\begin{enumerate}[(i)]
\item $\G$ has an even (\resp odd) full cyclic permutation ordering.
\item $\bG$ has a full cyclic permutation ordering and $|L_\G|$ is even (\resp odd).
\end{enumerate}}
\end{theorem}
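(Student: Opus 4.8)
The plan is to split $\pi_\omega$ into two independent pieces of data: its underlying unsigned cycle type, which will govern the condition on $\bG$, and its parity as a signed permutation, which will govern the count $|L_\G|$. First I would use the projection $p\colon \mathfrak{H}_n \to \mathfrak{S}_n$ onto the symmetric factor of the wreath product, $p(\eta)(i) \ceq |\eta(i)|$. It is a homomorphism with $p((i\ j)) = p((i\ {-j})) = (i\ j)$ and $p((i\ {-i})) = \mathrm{id}$, so the loops disappear and $p(\pi_\omega) = \pi_{\omega'}$, where $\omega'$ is the edge ordering that $\omega$ induces on $E_\bG = E_\G^+ \sqcup E_\G^-$. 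By \eqref{s1} a signed cycle of length $l$ projects under $p$ to an ordinary $l$-cycle, so $p$ preserves unsigned cycle type; hence $\pi_\omega$ is a signed cycle of length $n$ if and only if $\pi_{\omega'}$ is an $n$-cycle in $\mathfrak{S}_n$, that is, if and only if $\omega'$ is a full cyclic permutation ordering of $\bG$.

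To separate even from odd I would use the signature $\mathrm{sgn}\colon \mathfrak{H}_n \to \{\pm 1\}$ inherited from $\mathfrak{S}_{I_n}$. Evaluating on the three types of signed transposition gives $\mathrm{sgn}((i\ j)) = \mathrm{sgn}((i\ {-j})) = +1$ and $\mathrm{sgn}((i\ {-i})) = -1$, whence $\mathrm{sgn}(\pi_\omega) = (-1)^{|L_\G|}$. On the other side, \eqref{s3} exhibits an even $n$-cycle as a product of $n-1$ non-inversion transpositions, so it has signature $+1$; and \Cref{C2} exhibits an odd $n$-cycle as an even $n$-cycle times one inversion transposition, so it has signature $-1$. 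Therefore, once $\pi_\omega$ is known to be a signed cycle of length $n$, it is an even (\resp odd) full cyclic permutation precisely when $|L_\G|$ is even (\resp odd).

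Assembling these, (i)$\Rightarrow$(ii) is immediate: if $\omega$ is an even (\resp odd) full cyclic permutation ordering, then $\pi_\omega$ is a signed cycle of length $n$, so $\omega'$ is a full cyclic permutation ordering of $\bG$, and its signature forces $|L_\G|$ even (\resp odd). For (ii)$\Rightarrow$(i) I would start from a full cyclic permutation ordering of $\bG$ together with the parity hypothesis on $|L_\G|$, and define $\omega$ on $E_\G$ by extending that ordering with the loops inserted in arbitrary positions. Since loops are invisible to $p$, the permutation $p(\pi_\omega) = \pi_{\omega'}$ remains an $n$-cycle, so $\pi_\omega$ is a signed cycle of length $n$, and $\mathrm{sgn}(\pi_\omega) = (-1)^{|L_\G|}$ pins it down as even (\resp odd).

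The step requiring the most care, and really the crux of the statement, is that the outcome depends neither on the signs of the edges nor on where the loops sit: edge signs are invisible to both $p$ and $\mathrm{sgn}$, and loops are invisible to $p$. In terms of the preceding lemmas this is visible from \Tref{T1}, which rewrites a negative transposition $(i\ {-j})$ using exactly two inversion transpositions and is thus parity-neutral, so that only the $|L_\G|$ genuine loops can alter the even/odd type. Making this bookkeeping rigorous is the main technical point; one may carry it out through the homomorphism $\mathrm{sgn}$ as above, or, staying closer to the lemmas, by first commuting all inversion transpositions to one end using \Tref{T2} and \Tref{T3} and then absorbing them one at a time into the $n$-cycle via \Cref{C1} and \Cref{C2}, each absorption flipping the parity. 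Everything else is formal.
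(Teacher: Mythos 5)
Correct, and essentially the paper's own approach: your $p$ and $\mathrm{sgn}$ are precisely the homomorphisms $\mathit{\Phi}$ and $\mathit{\Psi}$ of Section \ref{sec2}, and your split of the statement into unsigned cycle type (governing $\bG$) plus parity (governing $|L_\G|$) is exactly how the paper divides Theorem \ref{nsc} into Proposition \ref{p27} and its converse. The only, harmless, difference is organizational: you read off the parity by evaluating $\mathrm{sgn}$ directly on the signed $n$-cycle instead of passing through the normal form $\pi_\omega=\nu_1\cdots\nu_r\pi_{\varphi(\omega)}$ of Lemma \ref{l24} and the absorption identities of Lemma \ref{Clemma}, and realizing the parity character as the signature inherited from $\mathfrak{S}_{I_n}$ makes its multiplicativity automatic, a point the paper's definition of $\mathit{\Psi}$ on generators leaves implicit.
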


\begin{theorem}\label{t2}\label{t15}
\BL{Given a signed graph $\G$, the following are equivalent.
\begin{enumerate}[(i)]
\item Any edge ordering of $\G$ is an even (resp\priod odd)  full cyclic permutation ordering.
\item $\bG$ is a tree and $|L_\G|$ is even (resp\priod odd), that is, $\G$ is a signed tree with even (resp odd) loops.
\end{enumerate}}
\end{theorem}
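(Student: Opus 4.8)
The plan is to reduce the assertion to the unsigned theorem of D\'enes (Theorem~\ref{denes}) through two homomorphisms out of $\mathfrak{H}_n$. First I would use the absolute-value map $\pi\mapsto|\pi|$ from $\mathfrak{H}_n$ to $\mathfrak{S}_n$ defined by $|\pi|(i)=|\pi(i)|$; the identity $\pi(-i)=-\pi(i)$ shows at once that this is a group homomorphism, that it carries a signed $l$-cycle \eqref{s1} to the ordinary $l$-cycle $(i_1\ i_2\ \cdots\ i_l)$, and hence that $\pi$ is a single full cyclic permutation of $\mathfrak{H}_n$ if and only if $|\pi|$ is a full cyclic permutation of $\mathfrak{S}_n$ (the cycle type, being read off support by support, is preserved). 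On the generators it acts by $|\tau_e|=(i\ j)$ for a positive or negative edge $e=\{i,j\}$ and $|\tau_e|=\mathrm{id}$ for a loop $e$, so for every edge ordering $\omega$ of $\G$ one gets $|\pi_\omega|=\pi_{\bar\omega}$, where $\bar\omega$ is the edge ordering of $\bG$ obtained from $\omega$ by deleting all loops.

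Second I would use the sign character $s\colon\mathfrak{H}_n\to\{+,-\}$, $s(\pi)=\prod_{i\in[n]}\mathrm{sgn}(\pi(i))$, which is readily checked to be a homomorphism and which takes the value $\epsilon_1\cdots\epsilon_l$ on the signed cycle \eqref{s1}; thus a full cyclic permutation is even exactly when $s=+$. Since $s(\tau_e)=+$ for every positive or negative edge and $s(\tau_e)=-$ for every loop, multiplicativity yields that $s(\pi_\omega)=+$ precisely when $|L_\G|$ is even, and this holds for \emph{every} ordering $\omega$ (the same invariant underlies Theorem~\ref{nsc}, and may also be read off from \Cref{C2}). In particular, whenever $\pi_\omega$ is a full cyclic permutation its parity is even if and only if $|L_\G|$ is even, independently of the chosen ordering.

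With these two facts the equivalence is essentially bookkeeping. Every ordering $\omega$ of $\G$ restricts to an ordering $\bar\omega$ of $\bG$, and conversely every ordering of $\bG$ extends to an ordering of $\G$ by inserting the loops in arbitrary positions; hence, via $|\pi_\omega|=\pi_{\bar\omega}$, the clause ``$\pi_\omega$ is full cyclic for every ordering of $\G$'' is equivalent to ``$\pi_{\bar\omega}$ is full cyclic for every ordering of $\bG$''. Combining this with the parity computation, statement (i) (in the even, \resp odd, case) holds if and only if every ordering of $\bG$ is a full cyclic permutation ordering and $|L_\G|$ is even (\resp odd). Applying Theorem~\ref{denes} to $\bG$ replaces the first condition by ``$\bG$ is a tree'', giving exactly (ii); and running the chain backwards proves (ii)$\Rightarrow$(i), since for a tree $\bG$ D\'enes makes each $\pi_{\bar\omega}$, and therefore each $\pi_\omega$, full cyclic, with parity fixed by $|L_\G|$.

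The step requiring the most care is the invocation of Theorem~\ref{denes} for $\bG$: by definition $\bG$ is a multigraph, whereas D\'enes' theorem is phrased for graphs. This causes no real trouble, because two parallel edges $e,e'$ placed consecutively in an ordering contribute $\tau_{e'}\tau_e=\mathrm{id}$ and cancel, so any multigraph with parallel edges admits an ordering whose product is not a full cycle; the hypothesis of (i) therefore already forces $\bG$ to be simple, after which Theorem~\ref{denes} applies verbatim. I would also record the degenerate case $E_\bG=\emptyset$, i.e.\ $n=1$, where the empty product and both homomorphisms behave consistently with the above.
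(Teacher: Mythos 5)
Your two homomorphisms are exactly the paper's tools: the absolute-value map is the projection $\mathit{\Phi}$ and your sign character $s$ is the paper's $\mathit{\Psi}$ (both packaged in Lemma \ref{reprelemma}), and your parity bookkeeping and the direction (ii)$\Rightarrow$(i) are sound --- in fact routing (ii)$\Rightarrow$(i) through D\'enes plus the $\varphi^{-1}$ machinery of Theorem \ref{nsc} is slicker than the paper's induction on $n$, which splits the tree at a non-loop edge and glues two signed cycles. The genuine gap is in (i)$\Rightarrow$(ii), precisely at the step you yourself flagged: the claim that parallel edges $e,e'$ placed consecutively cancel, ``so any multigraph with parallel edges admits an ordering whose product is not a full cycle.'' The cancellation only shows that the product of such an ordering equals the product of the remaining $m-2$ transpositions; it does not show that this product fails to be an $n$-cycle. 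Concretely, take $n=3$ and let $\bG$ have edges $\{1,2\}$, $\{2,3\}$ and two parallel copies of $\{1,3\}$: every ordering in which the two parallel copies are adjacent has product $(2\ 3)(1\ 2)$ or $(1\ 2)(2\ 3)$, both full $3$-cycles, so your recipe produces no witness. A failing ordering exists but must \emph{separate} the pair, e.g.\ $(1\ 3)(2\ 3)(1\ 3)(1\ 2)$ fixes the vertex $1$. Worse, the blanket claim is simply false for general multigraphs: a triple edge on two vertices yields $(1\ 2)$ for \emph{every} ordering, a full cycle on $[2]$. It is only excluded in this setting because $E^+_\G$ and $E^-_\G$ are sets, so $\bG$ has edge multiplicity at most $2$ --- a constraint your argument never invokes but which the paper uses explicitly (``there exist three edges between $v_1$ and $v_l$\dots'').

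This is why the paper does not reduce tree-ness to D\'enes at all: it proves the multigraph statement directly, choosing a minimal cycle $\C=(V_\C,E_\C)$ of $\bG$ (of length $2$ in the parallel-edge case, using the no-triple-edge observation) and exhibiting the explicit ordering $\omega_0=(c_1,\ldots,c_{l-1},d_1,\ldots,d_t,c_l,b_1,\ldots,b_s)$, with the edges of $I_{\bG\backslash\C}(v_l)$ last, whose product fixes $v_1$. To repair your proof you would need this construction (or an equivalent argument that separates a parallel pair by an edge incident to one of its endpoints); once that is in place, the rest of your reduction, including the appeal to D\'enes for the then-simple graph $\bG$, goes through as you describe.
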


\begin{corollary}\label{c4}\label{c16}
\textit{
\begin{enumerate}[(A)]
\item The number of representations of an even $n$-cycle by means of a product of $n-1$ transpositions is $n^{n-2}$.
\item The number of representations of an odd $n$-cycle by means of a product of $n$ transpositions is $n^n$.
\end{enumerate}}
\end{corollary}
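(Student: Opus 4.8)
The plan is to transport D\'enes' derivation of Corollary~\ref{denescor} from Theorem~\ref{denes} into the signed setting, now powered by Theorems~\ref{nsc} and~\ref{t2}. First I would record a representation $\tau_{e_m}\cdots\tau_{e_1}$ of a full $n$-cycle as an edge-ordered signed graph: each $e_k$ becomes a positive edge, a negative edge, or a loop according to whether $\tau_{e_k}$ is a positive transposition, a negative transposition, or an inversion transposition, and $\omega=(e_1,\dots,e_m)$ is the associated edge ordering. To make this a bijection onto edge-orderings of \emph{simple} signed graphs I must show that the minimal representations involve pairwise distinct signed transpositions; this will drop out of the structural analysis below.

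The heart of the matter is to identify these representations with edge-orderings of signed trees carrying the prescribed loop number. Suppose $\pi_\omega$ is a full $n$-cycle. Passing to absolute values, the induced permutation of $[n]$ is a single $n$-cycle and equals the product in $\mathfrak{S}_n$ of the ordinary transpositions $(i\ j)$ attached to the non-loop edges (loops act trivially on absolute values); since an $n$-cycle is a single $\mathfrak{S}_n$-orbit, these transpositions generate a transitive subgroup, so $\bG$ is connected and spanning and $|E_\bG|\ge n-1$. In case~(A), $m=n-1$ forces $|E_\bG|=n-1$ and $|L_\G|=0$, so $\bG$ is a tree. In case~(B), $m=n$ forces $|L_\G|\le 1$; and because $\pi_\omega$ is an \emph{odd} full cyclic permutation, the forward implication of Theorem~\ref{nsc} applied to $\omega$ shows $|L_\G|$ is odd, hence $|L_\G|=1$ and again $\bG$ is a tree. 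In either case all edges are distinct, so the bijection of the first paragraph holds; conversely Theorem~\ref{t2} guarantees that \emph{every} edge-ordering of a signed tree with $0$ (resp.\ $1$) loops is an even (resp.\ odd) full cyclic permutation ordering, so none of the configurations I enumerate is wasted.

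It then remains to enumerate the signed-graph side and average over the conjugacy class. By Cayley's formula there are $n^{\,n-2}$ labelled trees on $[n]$; each of the $n-1$ tree-edges is independently positive or negative, giving $2^{\,n-1}$ sign patterns; in case~(B) the unique loop sits at one of $n$ vertices; and the number of edge-orderings is $(n-1)!$ in case~(A) and $n!$ in case~(B). Thus, summed over all even (resp.\ odd) $n$-cycles, the number of representations is $n^{\,n-2}2^{\,n-1}(n-1)!$ in case~(A) and $n^{\,n-2}2^{\,n-1}\,n\cdot n!$ in case~(B). Conjugation by $\mathfrak{H}_n$ carries signed transpositions to signed transpositions and hence permutes these representations while acting transitively on the even (resp.\ odd) full cyclic permutations, which form a single conjugacy class of size $2^{\,n-1}(n-1)!$. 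Therefore every target cycle receives the same number of representations, and dividing yields $n^{\,n-2}$ for a fixed even $n$-cycle and $n^{\,n-2}\cdot n\cdot n=n^{\,n}$ for a fixed odd $n$-cycle.

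I expect the main obstacle to be the forward structural rigidity---showing that a minimal product is supported on a signed tree with exactly the forced number of loops, and in particular on distinct edges. The connectivity input is the orbit argument on the absolute-value action, while the loop-parity input is precisely the forward implication of Theorem~\ref{nsc}; granting these, Theorem~\ref{t2} supplies the converse and the remaining enumeration is bookkeeping. The one external fact I would pin down carefully is the class size $2^{\,n-1}(n-1)!$, which I would obtain by checking that the centralizer of a full $n$-cycle has order $2n$, generated by the powers of the cycle together with the central involution $-I$ negating all coordinates.
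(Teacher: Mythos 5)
Your proposal is correct and follows essentially the same route as the paper: encode each representation as an edge-ordered signed graph, use forward rigidity (the connectivity/loop-parity argument, which is the paper's Corollary \ref{ca} re-derived inline) to see the support is a signed tree with $0$ (resp.\ $1$) loops, invoke Theorem \ref{t2} for the converse, count $n^{n-2}\cdot 2^{n-1}\cdot n\cdot n!$ configurations, and divide by the conjugacy class size $2^{n-1}(n-1)!$. The only differences are cosmetic: you handle part (A) by the same count where the paper simply cites D\'enes' Corollary \ref{c13}, and you make explicit the conjugation-invariance step that the paper leaves implicit when it writes $|X|=x\cdot(n-1)!\cdot 2^{n-1}$.
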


More general results are known about Corollary $\ref{c4}$ . Let $W$ be a well-generalized complex reflection group of rank $n$ with Coxeter number $h$. Then the number of factorizations of a fixed Coxeter element $c \in W$ into a product of $n$ reflections is given by the following formula (see {\cite[\S1, Formula (1.1)]{chapuy_counting_2014-1}}):
\[
\#\bigset{(\tau_n,\ldots,\tau_1)}{\text{$\tau_i$ is a reflection in $W$,\ the product $\tau_n\cdots\tau_1$ is equal to $c$}} = \dfrac{n!}{|W|}h^n.
\]

For the hyperoctahedral group $\mathfrak{H}_n$, since $|\mathfrak{H}_n|=2^nn!$ and $h=2n$, the value of the above formula is $n^n$. In this paper, we prove Corollary $\ref{c4}$ by counting the number of sequences consisting of $n-1$ (resp\priod $n$) transpositions such that the product is an even (resp\priod odd) $n$-cycle.

Note that Chapuy and Stump {\cite[\S1, Theorem 1.1]{chapuy_counting_2014-1}} give the formula for the exponential generating function of factorizations of a fixed Coxeter element into a product of reflections.

The organization of this paper is as follows.
In Section $\ref{sec2}$, we discuss the representation of $\pi_\omega$ and prove Theorem $\ref{nsc}$.
In Section $\ref{sec3}$, we prove Theorem $\ref{t2}$ and Corollary $\ref{c4}$.

\section{Edge orderings} \label{sec2}
\subsection{The representation of the product obtained from the edge ordering}

In this subsection, we prove a key lemma about the representation of the permutation $\pi_\omega$. 

Let $\G=(V_\G,E^+_\G,E^-_\G,L_\G)$ be a signed graph.
\begin{definition}
For an edge ordering $\omega$ of $\G$, define $\varphi(\omega)$ as an edge ordering of $\bG$ obtained by excluding loops from $\omega$. For an edge ordering $\bar{\omega}$ of $\bG$, define $\varphi^{-1}(\bar{\omega})$ as the set of edge orderings $\omega$ of $\G$ such that $\varphi(\omega)=\bar{\omega}$.
\end{definition}

Define the projection
\[
\mathit{\Phi}:\mathfrak{H}_n \cong \mathfrak{S}_2^n \rtimes \mathfrak{S}_n \ni (\sigma_1,\sigma_2) \mapsto \sigma_2 \in \mathfrak{S}_n.
\]
Let $\omega$ be an edge ordering of $\G$. Since
\[
\mathit{\Phi}\bigl((i\ {+}j)\bigr) = \mathit{\Phi}\bigl((i\ {-}j)\bigr) = (i\ j),\quad \mathit{\Phi}\bigl((i\ {-}i)\bigr) = 1,
\]
where $i,j$ are distinct integers in $[n]$ and $1$ is the identity element of $\mathfrak{S}_n$, the permutation $\pi_{\varphi(\omega)}$ is equal to $\mathit{\Phi}(\pi_\omega)$.

\begin{lemma}\label{l24}\label{reprelemma}
For a signed graph $\G$, let $\omega$ be an edge ordering of $\G$. There exist some inversion transpositions $\nu_1,\ldots,\nu_r \in \mathfrak{H}_n$ such that $\pi_\omega=\nu_1\cdots\nu_r\pi_{\varphi(\omega)}$ and $r \equiv |L_\G| \pmod 2$.
\end{lemma}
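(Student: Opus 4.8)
The plan is to rewrite the word $\pi_\omega=\tau_{e_m}\cdots\tau_{e_1}$ in two stages, using only the three relations of Lemma~\ref{Tlemma}, so as to collect all inversion transpositions on the left while what remains on the right is exactly $\pi_{\varphi(\omega)}$. Throughout, I keep track of two things: the number of inversion letters (which will give the parity of $r$) and the relative order of the non-inversion letters (which will identify the right-hand factor).

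First I would eliminate the negative transpositions. For each negative edge $\{i,j\}\in E^-_\G$ I replace its factor $(i\ {-j})$ by the right-hand side of (T-1), namely $(i\ {-i})(j\ {-j})(i\ j)$. After doing this for every negative edge, $\pi_\omega$ becomes a word $w$ whose letters are only positive transpositions $(i\ j)$ and inversion transpositions. Reading $w$ from left to right, each loop contributes one inversion letter, each positive edge contributes one positive letter, and each negative edge contributes two inversion letters followed (immediately to their right) by one positive letter $(i\ j)$. In particular the number of inversion letters in $w$ is exactly $2|E^-_\G|+|L_\G|\equiv|L_\G|\pmod 2$, and the second stage will not change this count; this is the source of the parity claim.

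Next I would push every inversion letter to the left past every positive letter. The two tools are (T-3), which lets an inversion commute past a disjoint positive transposition, and (T-2), which lets it cross a positive transposition sharing an index at the cost of changing the index it inverts — but in either case a single inversion letter remains a single inversion letter, and the positive letter it crosses is left unchanged. To control the rewriting I introduce the monovariant $N(w)$ counting the pairs (positive letter, inversion letter) in which the inversion stands to the right of the positive. Applying (T-2) or (T-3) to an adjacent such pair decreases $N(w)$ by exactly one, while preserving the total number of inversion letters and the left-to-right order of the positive letters among themselves (since neither relation ever swaps two positive letters). Hence the process terminates at $N(w)=0$, where all inversions precede all positives, giving $w=\nu_1\cdots\nu_r\,\rho$ with $r=2|E^-_\G|+|L_\G|$ and $\rho$ the product of the positive letters in their original relative order. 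Because each non-loop edge $\{i,j\}$ contributed exactly one positive letter $(i\ j)$ in the position inherited from $\omega$, reading these off yields $\tau_{e_m}\cdots\tau_{e_1}$ with the loops deleted and all remaining signs positive, which is precisely $\pi_{\varphi(\omega)}$ by definition of $\varphi$. Therefore $\pi_\omega=\nu_1\cdots\nu_r\,\pi_{\varphi(\omega)}$ with $r\equiv|L_\G|\pmod 2$.

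The step I expect to require the most care is the second stage, where three things must hold at once: the rewriting must terminate, it must never create or destroy inversion letters (so that $r$ is exactly $2|E^-_\G|+|L_\G|$, not merely congruent to $|L_\G|$ by coincidence), and it must never interchange two positive letters (so that $\rho$ is genuinely $\pi_{\varphi(\omega)}$ rather than a rearranged product). The monovariant $N(w)$ is designed to handle all three simultaneously, since each admissible move transports a single inversion leftward across a single positive letter and touches no other pair.
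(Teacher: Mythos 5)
Your proof is correct, but it takes a genuinely different route from the paper's. The paper argues structurally: it invokes the unique decomposition $\pi_\omega=\sigma_1\sigma_2$ coming from $\mathfrak{H}_n \cong \mathfrak{S}_2^n \rtimes \mathfrak{S}_n$, identifies $\sigma_2=\mathit{\Phi}(\pi_\omega)=\pi_{\varphi(\omega)}$ via the projection $\mathit{\Phi}$, writes $\sigma_1 \in \mathfrak{S}_2^n$ as a product of inversion transpositions, and gets the parity of $r$ by evaluating the character $\mathit{\Psi}:\mathfrak{H}_n \to \{1,-1\}$ (equal to $-1$ exactly on inversion transpositions) on both sides, giving $(-1)^r=\mathit{\Psi}(\pi_\omega)=(-1)^{|L_\G|}$. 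Your rewriting argument via \Tref{T1}, \Tref{T2}, \Tref{T3} is essentially the informal three-step procedure the paper displays only as an illustration in Example~\ref{example}, promoted to an actual proof by your monovariant $N(w)$, which simultaneously handles termination, conservation of the number of inversion letters, and preservation of the relative order of the positive letters --- precisely the three points that make the naive ``just push everything left'' argument incomplete, and all of which check out against the relations of Lemma~\ref{Tlemma}. What each approach buys: yours is more elementary and more precise --- it avoids appealing to the semidirect-product structure and to $\mathit{\Psi}$ being a well-defined homomorphism (which the paper uses without detailed verification), it produces the exact count $r=2|E^-_\G|+|L_\G|$ rather than only a congruence, and it is an explicit algorithm; the paper's version is shorter and yields for free that the $\nu_i$ may be taken distinct and pairwise commuting (since $\sigma_1 \in \mathfrak{S}_2^n$), a normal form your $\nu_1,\ldots,\nu_r$ need not satisfy, as repeated inversions can occur in your output word. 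The lemma as stated does not require distinctness, and in any case you could recover it by a final cancellation step using $(i\ {-i})^2=1$, which removes inversions in pairs and hence preserves the parity claim.
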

\begin{proof}
Let $\omega=(e_1,\ldots,e_m)$ denote an edge ordering of $\G$. Since $\pi_\omega \in \mathfrak{H}_n \cong \mathfrak{S}_2^n \rtimes \mathfrak{S}_n$, there exist $\sigma_1 \in \mathfrak{S}_2^n$ and $\sigma_2 \in \mathfrak{S}_n$ uniquely such that $\pi_\omega=\sigma_1\sigma_2$.  We see that $\sigma_2=\mathit{\Phi}(\pi_\omega)=\pi_{\varphi(\omega)}$.
To show the latter of the statement, define the group homomorphism $\mathit{\Psi}:\mathfrak{H}_n \to \{1,-1\}$ by
\[
\mathit{\Psi}(\tau) = \begin{cases}
-1 &\text{if $\tau$ is an inversion transposition},\\
1&\text{otherwise}
\end{cases}
\]
for a signed transposition $\tau \in \mathfrak{H}_n$, where $\{1,-1\}$ is a multiplicative group of order two in the natural way. Then we have
\[
\mathit{\Psi}(\pi_\omega)=\mathit{\Psi}(\tau_{e_1})\cdots\mathit{\Psi}(\tau_{e_m})=(-1)^{|L_\G|},
\]
where $\omega=(e_1,\ldots,e_m)$. On the other hand, since $\sigma_1 \in \mathfrak{S}_2^n$, there exist inversion transpositions $\nu_1,\ldots,\nu_r$ such that $\sigma_1=\nu_1\cdots\nu_r$. Hence we get  
\[
\mathit{\Psi}(\pi_\omega)=\mathit{\Psi}(\sigma_1)\mathit{\Psi}(\sigma_2) = \mathit{\Psi}(\nu_1)\cdots\mathit{\Psi}(\nu_r) = (-1)^r.
\]
Thus $r \equiv |L_\G| \pmod 2$.
\end{proof}

\begin{example}\label{example}
\textrm{Assume that $\pi_\omega=(1\ 2)(2\ {-}2)(2\ {-}3)(3\ {-}4)(5\ {-}5)(3\ 5)$, then $\pi_{\varphi(\omega)}=(1\ 2)(2\ 3)(3\ 4)(3\ 5)$. The representation given by Lemma $\ref{reprelemma}$ is obtained in the following steps using Lemma $\ref{Tlemma}$:}
\begin{enumerate}[Step 1.]
\item Move all inversion transpositions contained in $\pi_\omega$ to the left by \Tref{T2} or \Tref{T3}.\label{st1}
\item Change all negative transpositions contained in $\pi_\omega$ to the product of two inversion transpositions and one positive transposition by \Tref{T1}.\label{st2}
\item Move all inversion transpositions resulting from Step $\ref{st2}$ to the left by \Tref{T2} or \Tref{T3}.\label{st3}
\end{enumerate}

Therefore, we obtain
\begin{align}
\pi_\omega &=(1\ 2)(2\ {-}2)(2\ {-}3)(3\ {-}4)(5\ {-}5)(3\ 5)\nonumber\\
&=(1\ {-}1)(5\ {-}5) \cdot (1\ 2)(2\ {-}3)(3\ {-}4)(3\ 5)\tag{by Step $\ref{st1}$}\\
&=(1\ {-}1)(5\ {-}5)\cdot(1\ 2)\cdot(2\ {-}2)(3\ {-}3)(2\ 3)\cdot(3\ {-}3)(4\ {-}4)(3\ 4)\cdot(3\ 5)\tag{by Step $\ref{st2}$}\\
&=(1\ {-}1)(5\ {-}5)(1\ {-}1)(3\ {-}3)(1\ {-}1)(4\ {-}4)\cdot (1\ 2)(2\ 3)(3\ 4)(3\ 5)\tag{by Step $\ref{st3}$}\\
&=(1\ {-}1)(3\ {-}3)(4\ {-}4)(5\ {-}5)\cdot \pi_{\varphi(\omega)}.\nonumber
\end{align}
\end{example}

\subsection{The condition to have a full cyclic permutation ordering}

Now, we prove one of the main theorems, the condition for a signed graph to have a full cyclic permutation ordering.

\begin{theorem}[Restatement of Theorem $\ref{nsc}$]\label{tnsc}
\BL{Given a signed graph $\G$, the following are equivalent.
\begin{enumerate}[(i)]
\item $\G$ has an even (\resp odd) full cyclic permutation ordering.
\item $\bG$ has a full cyclic permutation ordering and $|L_\G|$ is even (\resp odd).
\end{enumerate}}
\end{theorem}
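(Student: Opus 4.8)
The plan is to reduce the theorem to the two homomorphisms already at hand: the projection $\mathit{\Phi}:\mathfrak{H}_n\to\mathfrak{S}_n$, which records the permutation of absolute values $i\mapsto|\eta(i)|$, and the sign character $\mathit{\Psi}:\mathfrak{H}_n\to\{1,-1\}$ introduced in the proof of Lemma \ref{reprelemma}. The conceptual core is a characterization of the target objects: an element $\eta\in\mathfrak{H}_n$ is an even (\resp odd) full cyclic permutation if and only if $\mathit{\Phi}(\eta)$ is a cyclic permutation of length $n$ in $\mathfrak{S}_n$ and $\mathit{\Psi}(\eta)=1$ (\resp $-1$). I would prove this first; everything else is bookkeeping through the maps $\varphi$ and $\varphi^{-1}$.

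To establish the characterization, I would use the unique decomposition of $\eta$ into commuting signed cycles. Since $\mathit{\Phi}$ records $i\mapsto|\eta(i)|$, it carries a signed cycle of length $l$ on a support $S$ to an unsigned $l$-cycle on the same $S$, and it neither merges nor splits cycles; hence the cycle type of $\mathit{\Phi}(\eta)$ in $\mathfrak{S}_n$ is exactly the list of signed-cycle lengths of $\eta$. Therefore $\mathit{\Phi}(\eta)$ is an $n$-cycle precisely when $\eta$ is a single signed $n$-cycle. For the parity, I would compute $\mathit{\Psi}$ on a signed $n$-cycle: by the factorization \eqref{s3} an even cycle is a product of positive and negative transpositions only, so $\mathit{\Psi}=1$, whereas by \Cref{C2} an odd cycle differs from an even one by a single inversion transposition, so $\mathit{\Psi}=-1$; this agrees with the definition of even/odd via $\epsilon_1\cdots\epsilon_n$.

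With the characterization in place, both implications are short. For (i)$\Rightarrow$(ii), I apply it to $\eta=\pi_\omega$: the identity $\mathit{\Phi}(\pi_\omega)=\pi_{\varphi(\omega)}$ shows that $\varphi(\omega)$ is a full cyclic permutation ordering of $\bG$, and the value $\mathit{\Psi}(\pi_\omega)=(-1)^{|L_\G|}$ computed in the proof of Lemma \ref{reprelemma} forces $|L_\G|$ to have the prescribed parity. For (ii)$\Rightarrow$(i), given a full cyclic ordering $\bar\omega$ of $\bG$ with $|L_\G|$ of the prescribed parity, I pick any $\omega\in\varphi^{-1}(\bar\omega)$; this set is nonempty, e.g.\ insert the loops after all non-loop edges. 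Then $\mathit{\Phi}(\pi_\omega)=\pi_{\bar\omega}$ is an $n$-cycle, so by the characterization $\pi_\omega$ is a signed $n$-cycle, and $\mathit{\Psi}(\pi_\omega)=(-1)^{|L_\G|}$ determines its parity; hence $\omega$ is an even (\resp odd) full cyclic permutation ordering of $\G$.

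The main obstacle is the characterization, specifically the direction that $\mathit{\Phi}(\eta)$ being an $n$-cycle forces $\eta$ to be a single signed $n$-cycle: one must argue that passing to absolute values preserves the cycle-length structure and does not fuse several short signed cycles into one long orbit. Once this is secured, the parity computation via $\mathit{\Psi}$ together with Lemma \ref{reprelemma} is routine, and the two existential quantifiers (``$\G$ has$\ldots$'' versus ``$\bG$ has$\ldots$'') are matched simply by deleting or inserting loops.
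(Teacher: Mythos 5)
Your proposal is correct, but it is organized around a different pivot than the paper's proof. The paper splits the theorem into two propositions, both hinging on Lemma~\ref{reprelemma} (the factorization $\pi_\omega=\nu_1\cdots\nu_r\,\pi_{\varphi(\omega)}$ with $r\equiv|L_\G|\pmod 2$) together with the cycle identities \Cref{C1}--\Cref{C2}: in the forward direction it expands the full signed cycle via \eqref{s3} and applies $\mathit{\Phi}$, and in the backward direction it multiplies the even full cycle $\pi_{\bar\omega}$ by the $r$ inversion transpositions, using \Cref{C2} (after rotating with \Cref{C1}) to see that each factor flips the parity while preserving fullness. You instead prove a membership characterization --- $\eta$ is an even (\resp odd) full cyclic permutation iff $\mathit{\Phi}(\eta)$ is an $n$-cycle and $\mathit{\Psi}(\eta)=1$ (\resp $-1$) --- via the commuting signed-cycle decomposition, and then both implications reduce to the two identities $\mathit{\Phi}(\pi_\omega)=\pi_{\varphi(\omega)}$ and $\mathit{\Psi}(\pi_\omega)=(-1)^{|L_\G|}$, which the paper already records. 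You correctly isolate the one nontrivial point, namely that $\mathit{\Phi}$ preserves cycle type (a signed $l$-cycle on support $S$ maps to an $l$-cycle on $S$, supports are disjoint, and signed $1$-cycles map to fixed points), so that an $n$-cycle image forces a single signed $n$-cycle; this structural fact replaces the paper's multiplicative bookkeeping with \Cref{C2}, and it is exactly the converse implication the paper never needs because its backward direction constructs $\pi_{\omega'}$ explicitly. What each approach buys: the paper's factorization lemma is reused later in the induction proving Theorem~\ref{t15}, so it earns its keep, while your characterization makes the two directions of Theorem~\ref{nsc} symmetric one-liners and handles inversion transpositions at arbitrary positions without any cycle rotation. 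One small caveat: you inherit $\mathit{\Psi}$ from the paper, which defines it only by values on generators; a fully self-contained version of your argument should note that $\mathit{\Psi}(\eta)=\prod_{i=1}^{n}\operatorname{sgn}(\eta(i))$ is a genuine homomorphism extending those values, but since the paper makes the same implicit assumption this is not a gap relative to it.
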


We devise this theorem in two propositions. 

\begin{proposition}\label{p27}
Let $\G$ be a signed graph. If $\omega$ is an even (\resp odd) full cyclic permutation ordering of $\G$, then $\varphi(\omega)$ is a full cyclic permutation ordering of $\bG$ and $|L_\G|$ is even (\resp odd).
\end{proposition}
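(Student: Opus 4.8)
The plan is to extract the two conclusions separately, using the two homomorphisms $\mathit{\Phi}$ and $\mathit{\Psi}$ that are already in play: $\mathit{\Phi}$ detects the underlying (unsigned) cyclic structure, while $\mathit{\Psi}$ detects the loop parity. Throughout, $\pi_\omega$ is assumed to be a signed $n$-cycle, which is even when $\epsilon_1\cdots\epsilon_n=+$ and odd when $\epsilon_1\cdots\epsilon_n=-$ in the notation of \eqref{s1}.

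For the first conclusion I would use the identity $\pi_{\varphi(\omega)}=\mathit{\Phi}(\pi_\omega)$ recorded just before Lemma \ref{reprelemma}. It then suffices to observe that $\mathit{\Phi}$ carries a signed $n$-cycle to a full cyclic permutation of $\mathfrak{S}_n$: writing $\pi_\omega$ in the form \eqref{s1} with $l=n$, the map $\mathit{\Phi}$ forgets all signs, so $\mathit{\Phi}(\pi_\omega)$ sends $i_k\mapsto i_{k+1}$ cyclically (indices modulo $n$) and is the $n$-cycle $(i_1\ i_2\ \cdots\ i_n)$. Hence $\pi_{\varphi(\omega)}$ is a cyclic permutation of length $n$, i.e. $\varphi(\omega)$ is a full cyclic permutation ordering of $\bG$.

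For the parity of $|L_\G|$ I would compute $\mathit{\Psi}(\pi_\omega)$ in two ways. On one hand, by Lemma \ref{reprelemma} we have $\pi_\omega=\nu_1\cdots\nu_r\,\pi_{\varphi(\omega)}$ with each $\nu_i$ an inversion transposition and $r\equiv|L_\G|\pmod 2$; since $\pi_{\varphi(\omega)}\in\mathfrak{S}_n\subset\ker\mathit{\Psi}$ and $\mathit{\Psi}(\nu_i)=-1$, this gives $\mathit{\Psi}(\pi_\omega)=(-1)^r=(-1)^{|L_\G|}$. On the other hand, $\mathit{\Psi}$ evaluated on the signed cycle itself equals $+1$ when $\pi_\omega$ is even and $-1$ when it is odd: by the factorization \eqref{s3} an even cycle is a product of positive and negative transpositions, all of which lie in $\ker\mathit{\Psi}$, so $\mathit{\Psi}=+1$; and by \Cref{C2} an odd cycle is such an even cycle multiplied by a single inversion transposition, so $\mathit{\Psi}=-1$. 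Equating the two computations yields $|L_\G|$ even in the even case and odd in the odd case.

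The routine parts, namely that $\mathit{\Phi}$ forgets signs and that $\mathfrak{S}_n\subset\ker\mathit{\Psi}$, are immediate from the definitions. The one point carrying the real content is the second evaluation of $\mathit{\Psi}$ on the cycle, where the even/odd dichotomy of signed cycles must be matched exactly with the value of $\mathit{\Psi}$; here I would lean on \eqref{s3} and \Cref{C2} rather than on a direct count of negative entries, so that the argument stays within the lemmas already established.
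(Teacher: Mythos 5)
Your proof is correct and takes essentially the same route as the paper: both establish the first conclusion via $\pi_{\varphi(\omega)}=\mathit{\Phi}(\pi_\omega)$ applied to the signed $n$-cycle, and both get the loop parity from the decomposition $\pi_\omega=\nu_1\cdots\nu_r\,\pi_{\varphi(\omega)}$ of Lemma \ref{reprelemma} together with \eqref{s3} and \Cref{C2}. The only cosmetic difference is that you read off the parity by evaluating the homomorphism $\mathit{\Psi}$ (defined inside the proof of Lemma \ref{reprelemma}) in two ways, whereas the paper invokes Lemma \ref{Clemma} directly to see that each $\nu_i$ flips the even-oddness of the full cycle; the underlying content is identical, and your $\mathit{\Psi}$-formulation merely packages the iterated parity flips into a single homomorphism computation.
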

\begin{proof}
Suppose that $\omega$ is an even (\resp odd) full cyclic permutation ordering. By Lemma $\ref{reprelemma}$, there exist inversion transpositions $\nu_1,\ldots,\nu_r$ such that $\pi_\omega=\nu_1\cdots\nu_r\pi_{\varphi(\omega)}$ and $r \equiv |L_\G| \pmod 2$. We know that $\varphi(\omega)$ is a full cyclic permutation ordering as follows by $\eqref{s2}$, $\eqref{s3}$ and Lemma $\ref{Clemma}$:
\begin{align*}
\pi_{\varphi(\omega)} = \mathit{\Phi}(\pi_\omega) &= \mathit{\Phi}(\bigl(i_1\ \varepsilon_1i_2\ \varepsilon_2i_3\ \cdots\cdots\ \varepsilon_{n-1}i_n\bigr)_\varepsilon) \\
&= \mathit{\Phi}((i_1\ \varepsilon i_1)(i_1\ \epsilon_1i_2)(i_2\ \epsilon_2i_3) \cdots\cdots (i_{n-1}\ \epsilon_{n-1}i_n))\\
&= (i_1\ i_2)(i_2\ i_3) \cdots\cdots (i_{n-1}\ i_n)\\
&= \bigl(i_1\ i_2\ \cdots\cdots\ i_n\bigr)_+,
\end{align*}
where $i_1,\ldots,i_n$ are distinct integers in $[n]$, $\epsilon_1,\ldots,\epsilon_{n-1} \in \{+,-\}$, $\varepsilon_i=\epsilon_1\cdots\epsilon_i$. 

Moreover, we see that $r$ is even (\resp odd) by Lemma $\ref{Clemma}$. Thus $|L_\G|$ is also even (\resp odd).
\end{proof}

\begin{proposition}
Let $\G$ be a signed graph. If $\bar{\omega}$ is a full cyclic permutation ordering of $\bG$ and $|L_\G|$ is even (\resp odd), then any edge ordering in $\varphi^{-1}(\bar{\omega})$ is an even (\resp odd) full cyclic permutation ordering of $\G$.
\end{proposition}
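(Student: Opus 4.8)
The plan is to fix an arbitrary edge ordering $\omega \in \varphi^{-1}(\bar{\omega})$ and analyze $\pi_\omega$ directly through the representation result Lemma \ref{reprelemma}. Since $\varphi(\omega)=\bar{\omega}$, we have $\pi_{\varphi(\omega)}=\pi_{\bar{\omega}}$, which by hypothesis is a cyclic permutation of length $n$ in $\mathfrak{S}_n$; regarded inside $\mathfrak{H}_n$ this is an even $n$-cycle $\bigl(i_1\ i_2\ \cdots\ i_n\bigr)_+$ whose support is all of $[n]$. Lemma \ref{reprelemma} then supplies inversion transpositions $\nu_1,\ldots,\nu_r$ with $\pi_\omega=\nu_1\cdots\nu_r\,\pi_{\bar{\omega}}$ and $r \equiv |L_\G| \pmod 2$. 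The whole statement will follow once the effect of left-multiplying a full signed $n$-cycle by a single inversion transposition is understood.

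First I would isolate the essential combinatorial claim: for any $j \in [n]$ and any signed $n$-cycle $\sigma$ whose support is $[n]$, the product $(j\ {-j})\,\sigma$ is again a signed $n$-cycle on $[n]$ whose parity is the opposite of that of $\sigma$. To prove this, note that since $\sigma$ is a full cycle, its entry with absolute value $j$ occurs exactly once; using the cyclic-rotation relation \Cref{C1} I would rotate the cycle so that this entry becomes the leading one, and then apply \Cref{C2}, which records precisely that prepending the inversion transposition indexed by the leading entry flips the sign subscript $\varepsilon \mapsto -\varepsilon$. Here one uses that an inversion transposition depends only on the absolute value of its index, so it is irrelevant whether the rotated leading entry is positive or negative.

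With the claim in hand, the argument is an induction on $r$. Starting from the even $n$-cycle $\pi_{\bar{\omega}}$ and applying the claim once for each factor $\nu_k$, from the innermost $\nu_r$ outward to $\nu_1$, I obtain at every stage a signed $n$-cycle on $[n]$, with the parity alternating at each step. Hence $\pi_\omega$ is a signed $n$-cycle whose parity is even if $r$ is even and odd if $r$ is odd. Since $r \equiv |L_\G| \pmod 2$, this yields exactly that $\pi_\omega$ is an even full cyclic permutation when $|L_\G|$ is even and an odd one when $|L_\G|$ is odd, for every $\omega \in \varphi^{-1}(\bar{\omega})$, as required.

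The main obstacle I anticipate is the bookkeeping inside the combinatorial claim: the relation \Cref{C2} as stated requires the inversion transposition to match the leading entry of the cycle, whereas the $\nu_k$ delivered by Lemma \ref{reprelemma} are arbitrary inversion transpositions. The resolution is the preliminary rotation via \Cref{C1}, but one must verify that rotation does not disturb the data the next step needs---in particular that, since the inversion transposition depends only on $|j|$, it still matches the leading entry after rotation, and that the subscript $\varepsilon$ is genuinely reversed rather than merely relocated within the cycle. Once these sign-tracking points are checked, the remaining induction is routine.
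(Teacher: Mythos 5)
Your proposal is correct and takes essentially the same route as the paper: both invoke Lemma \ref{reprelemma} to write $\pi_\omega=\nu_1\cdots\nu_r\,\pi_{\bar{\omega}}$ with $r\equiv|L_\G|\pmod 2$ and then flip parity once per inversion transposition via Lemma \ref{Clemma}. Your explicit rotation step --- using \Cref{C1} to bring the entry of absolute value $j$ to the front before applying \Cref{C2}, noting that $(j\ {-j})$ depends only on $|j|$ --- merely spells out the detail the paper leaves implicit in its remark that multiplying by ``one appropriate inversion transposition'' changes the even-oddness, and is valid precisely because $\pi_{\bar{\omega}}$ is a full cycle, so every $j\in[n]$ occurs in it.
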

\begin{proof}

Assume that $|L_\G|$ is even (\resp odd) and let $\bar{\omega}$ be a full cyclic permutation ordering of $\bG$. For any edge ordering $\omega'$ in $\varphi^{-1}(\bar{\omega})$, there exist inversion transpositions $\nu_1,\ldots,\nu_r$ such that  
\[
\pi_{\omega'}=\nu_1\cdots\nu_r\pi_{\varphi(\omega')}=\nu_1\cdots\nu_r\pi_{\bar{\omega}}
\]
by Lemma $\ref{reprelemma}$, where $r$ is even (\resp odd). 
Since $\pi_{\bar{\omega}}$ is an even full cyclic permutation in $\mathfrak{H}_n$, we know that $\pi_{\omega'}$ is an even (\resp odd) full cyclic permutation by Lemma $\ref{Clemma}$. Hence $\omega'$ is an even (\resp odd) full cyclic permutation ordering of $\G$.
\end{proof}

Since an unsigned graph with a full cyclic permutation ordering is connected, we get the following corollary:
\begin{corollary}\label{ca}
If a signed graph $\G$ has an odd full cyclic permutation ordering and $|E_\G|=n$, then $\bG$ is a tree and $|L_\G|=1$, that is, $\G$ is a signed tree with one loop.
\end{corollary}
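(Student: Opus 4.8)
The plan is to read off the structural conclusion directly from Theorem \ref{tnsc} together with the basic Proposition on connectivity and the Betti number from the introduction, so that essentially no new computation is required; the whole argument is a short edge count.

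First I would apply Theorem \ref{tnsc} to the hypothesis. Since $\G$ has an odd full cyclic permutation ordering, condition (ii) of that theorem yields that $\bG$ has a full cyclic permutation ordering and that $|L_\G|$ is odd; in particular $|L_\G| \geq 1$.

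Next I would count edges. By definition $E_\bG = E^+_\G \sqcup E^-_\G$, so the loops are exactly the edges of $\G$ that do not survive in $\bG$; hence $|E_\bG| = |E_\G| - |L_\G| = n - |L_\G|$, using the hypothesis $|E_\G| = n$. Because $\bG$ carries a full cyclic permutation ordering, the (unsigned) Proposition in the introduction tells us $\bG$ is connected. A connected graph on the $n = |V_\bG|$ vertices has at least $n - 1$ edges, so $n - |L_\G| \geq n - 1$, i.e.\ $|L_\G| \leq 1$. Combined with $|L_\G|$ odd, this forces $|L_\G| = 1$.

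Finally, with $|L_\G| = 1$ we obtain $|E_\bG| = n - 1 = |V_\bG| - 1$; a connected multigraph with $|V| - 1$ edges has no cycles, equivalently its Betti number $\beta(\bG) = |E_\bG| - |V_\bG| + 1$ vanishes, so $\bG$ is a tree. Thus $\G$ is a signed tree with exactly one loop, as claimed. The only point needing a little care is the passage from the signed object to $\bG$: one must note that the cited Proposition is an unsigned statement applying verbatim to the unsigned multigraph $\bG$, and that for a multigraph ``connected with $n-1$ edges'' genuinely rules out multiple edges and yields an honest tree. I expect no substantial obstacle beyond this bookkeeping.
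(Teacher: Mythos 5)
Your proposal is correct and is essentially the paper's own argument: the paper derives Corollary \ref{ca} directly from Theorem \ref{tnsc} (giving $|L_\G|$ odd and a full cyclic permutation ordering on $\bG$) together with the connectivity of any graph admitting such an ordering, and the same edge count $|E_\bG| = n - |L_\G| \geq n-1$ forcing $|L_\G| = 1$ and $\bG$ a tree. Your explicit attention to the multigraph bookkeeping (connectivity applying verbatim to $\bG$, and $n-1$ edges ruling out parallel edges) is a careful spelling-out of what the paper leaves implicit.
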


\section{Signed trees and full cyclic permutations}\label{sec3}

Now, we prove the remaining main theorems. 

\subsection{The proof of Theorem $\ref{t15}$}

\begin{theorem}[Restatement of Theorem $\ref{t15}$]
{Given a signed graph $\G$, the following are equivalent.
\begin{enumerate}[(i)]
\item Any edge ordering of $\G$ is an even (resp\priod odd)  full cyclic permutation ordering.\label{t311}
\item $\bG$ is a tree and $|L_\G|$ is even (resp\priod odd), that is, $\G$ is a signed tree with even (resp odd) loops.\label{t312}
\end{enumerate}}
\end{theorem}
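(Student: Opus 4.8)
The plan is to push the entire statement down to the unsigned multigraph $\bG$ through the loop-forgetting correspondence $\varphi$ and then apply D\'enes' theorem (Theorem~\ref{denes}); the two propositions already used to prove Theorem~\ref{tnsc} furnish the precise translation between edge orderings of $\G$ and edge orderings of $\bG$, so that all the parity bookkeeping is handled by Lemma~\ref{reprelemma} and Lemma~\ref{Clemma} and no fresh signed-cycle computation is required.

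For the implication (ii)$\Rightarrow$(i), I would assume $\bG$ is a tree and $|L_\G|$ is even (\resp odd), and let $\omega$ be an arbitrary edge ordering of $\G$. Then $\varphi(\omega)$ is an edge ordering of the tree $\bG$, hence a full cyclic permutation ordering by the direction (ii)$\Rightarrow$(i) of Theorem~\ref{denes}. Since $\omega$ lies in $\varphi^{-1}(\varphi(\omega))$ and $|L_\G|$ is even (\resp odd), the second proposition used in the proof of Theorem~\ref{tnsc} shows that $\omega$ is an even (\resp odd) full cyclic permutation ordering. As $\omega$ was arbitrary, (i) follows.

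For the converse (i)$\Rightarrow$(ii), I would assume that every edge ordering of $\G$ is an even (\resp odd) full cyclic permutation ordering. Applying Proposition~\ref{p27} to any single such ordering already yields that $|L_\G|$ is even (\resp odd). To obtain that $\bG$ is a tree I would verify condition (i) of Theorem~\ref{denes} for $\bG$, i.e.\ that \emph{every} edge ordering $\bar{\omega}$ of $\bG$ is a full cyclic permutation ordering. Given such a $\bar{\omega}$, the fibre $\varphi^{-1}(\bar{\omega})$ is nonempty, since one may insert the loops of $\G$ into $\bar{\omega}$ in any positions; choosing $\omega \in \varphi^{-1}(\bar{\omega})$, the hypothesis makes $\omega$ an even (\resp odd) full cyclic permutation ordering, and then Proposition~\ref{p27} gives that $\bar{\omega}=\varphi(\omega)$ is a full cyclic permutation ordering of $\bG$. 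Hence $\bG$ satisfies condition (i) of Theorem~\ref{denes}, and therefore $\bG$ is a tree.

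The step I expect to require the most care is this last appeal to D\'enes' theorem, because it is applied to the multigraph $\bG$, which may in principle carry two parallel edges between one pair of vertices (one positive and one negative). I would either confirm that the cited form of Theorem~\ref{denes} is valid for multigraphs, or argue directly that a repeated edge, like a genuine cycle, admits an edge ordering of $\bG$ whose product is not a full cyclic permutation, contradicting what has just been shown. Everything else is routine: the even/odd distinction never interacts with the graph structure and is controlled entirely by the count $r \equiv |L_\G| \pmod 2$ of inversion transpositions from Lemma~\ref{reprelemma} together with the parity-switching rule of Lemma~\ref{Clemma}.
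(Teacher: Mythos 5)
Your proof is correct, but it takes a genuinely different route from the paper's. The paper proves both directions from scratch: for (i)$\Rightarrow$(ii) it argues by contraposition, choosing a minimal cycle $\C$ in $\bG$ and exhibiting an explicit ordering $\omega_0=(c_1,\ldots,c_{l-1},d_1,\ldots,d_t,c_l,b_1,\ldots,b_s)$ whose product fixes the vertex $v_1$; for (ii)$\Rightarrow$(i) it runs an induction on $n$, conjugating the ordering so that the first edge $e_1$ is not a loop, deleting $e_1$ to split $\G$ into two signed trees with loops, and splicing the two full cycles supplied by the induction hypothesis via the identity $\bigl(i_1\ i_2\ \cdots\ i_l\bigr)_+\bigl(j_1\ \cdots\ j_k\bigr)_+(i_1\ j_1)=\bigl(i_1\ j_2\ \cdots\ j_k\ j_1\ i_2\ \cdots\ i_l\bigr)_+$. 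You instead factor the whole theorem through the loop-forgetting map $\varphi$, the two propositions behind Theorem~\ref{tnsc}, and D\'enes' theorem, which makes the proof much shorter and exposes the clean structural fact that Theorem~\ref{t15} is just the ``for all orderings'' version of Theorem~\ref{nsc} combined with the classical unsigned result; note in particular that the paper's inductive argument never invokes D\'enes at all, and in fact re-proves his theorem as the special case $E^-_\G=L_\G=\emptyset$, so the paper buys self-containedness where you buy brevity. The one point you rightly flag is the only real obligation your route incurs: Theorem~\ref{denes} is stated for graphs, while $\bG$ may carry a doubled edge (one positive, one negative copy of the same pair), so you must separately rule that out. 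Your fallback argument does work, and it is exactly the $l=2$ instance of the paper's minimal-cycle construction: with parallel edges $c_1,c_2$ between $v_1,v_2$, order $c_1$ first, then all edges not incident to $v_2$, then $c_2$, then the edges incident to $v_2$; since $E^+_\G$ and $E^-_\G$ are sets, $\bG$ has edge multiplicity at most two (this is where the paper's remark about ``three edges between $v_1$ and $v_l$'' enters), so none of the trailing edges meets $v_1$, and the product fixes $v_1$ and is not full cyclic. With that lemma in place, your argument is complete and has no circularity, since Proposition~\ref{p27} and its companion are proved from Lemmas~\ref{reprelemma} and~\ref{Clemma} alone, independently of Theorem~\ref{t15}.
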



First we show $\eqref{t311} \Rightarrow \eqref{t312}$. We know that if a signed graph $\G$ has an even (\resp odd) full cyclic permutation ordering, then $|L_\G|$ is even (\resp odd) by Proposition $\ref{p27}$. 
Therefore we need to show the following.

\begin{proposition}
If any edge ordering of $\G$ is an even (\resp odd)  full cyclic permutation ordering, then $\bG$ is a tree.
\end{proposition}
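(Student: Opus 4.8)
The plan is to prove the contrapositive: assuming $\bG$ is not a tree, I will construct a specific edge ordering $\omega$ of $\G$ whose associated permutation $\pi_\omega$ fails to be an $n$-cycle, contradicting the hypothesis that \emph{every} edge ordering is a full cyclic permutation ordering. By Lemma $\ref{reprelemma}$, the projection $\pi_{\varphi(\omega)} = \mathit{\Phi}(\pi_\omega)$ is an $n$-cycle in $\mathfrak{S}_n$ whenever $\pi_\omega$ is a full $n$-cycle in $\mathfrak{H}_n$ (even or odd), since $\mathit{\Phi}$ carries any full cyclic permutation of $\mathfrak{H}_n$ to a cyclic permutation of $\mathfrak{S}_n$, as computed in the proof of Proposition $\ref{p27}$. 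Hence if every edge ordering of $\G$ is an even (\resp odd) full cyclic permutation ordering, then every edge ordering $\bar\omega$ of $\bG$ arising as $\varphi(\omega)$ is a full cyclic permutation ordering of $\bG$.

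The key observation is that the correspondence $\varphi$ is surjective onto the edge orderings of $\bG$: given any linear order $\bar\omega = (\bar e_1,\ldots,\bar e_k)$ on $E_\bG = E_\G^+ \sqcup E_\G^-$, one can interleave the loops of $L_\G$ arbitrarily to obtain an edge ordering $\omega$ of $\G$ with $\varphi(\omega) = \bar\omega$, so that $\varphi^{-1}(\bar\omega) \neq \emptyset$. Therefore the hypothesis forces \emph{every} edge ordering of $\bG$ to be a full cyclic permutation ordering. First I would make this reduction precise, and then invoke D\'enes' Theorem $\ref{denes}$: an unsigned (multi)graph all of whose edge orderings are full cyclic permutation orderings must be a tree. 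This immediately yields that $\bG$ is a tree, completing the proof.

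The main subtlety to address is that D\'enes' theorem as stated in Theorem $\ref{denes}$ concerns ordinary graphs, while $\bG$ is an unsigned \emph{multigraph} (since $E_\G^+$ and $E_\G^-$ may share an underlying pair $\{i,j\}$, producing parallel edges in $\bG$). I would handle this by noting that if $\bG$ has a pair of parallel edges on vertices $\{i,j\}$, then placing those two edges last in an ordering contributes $\tau\tau = 1$ to the product for the transposition $\tau = (i\ j)$, so $\pi_{\bar\omega}$ is a product of transpositions on the remaining $k-2$ edges and fixes the structure in a way that prevents a full $n$-cycle; more directly, the Betti number parity or connectivity argument underlying D\'enes' result extends to multigraphs, and any multigraph that is not a tree (i.e.\ has $\beta(\bG) > 0$) admits an edge ordering failing to be full cyclic. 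I expect this multigraph extension of D\'enes' theorem to be the only real obstacle; everything else is the surjectivity of $\varphi$ together with the projection computation already established in Lemma $\ref{reprelemma}$ and Proposition $\ref{p27}$.
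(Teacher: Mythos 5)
Your reduction to $\bG$ is sound, and it is a genuinely different opening from the paper's: $\varphi$ is surjective onto the edge orderings of $\bG$ (interleave the loops arbitrarily), and Proposition~\ref{p27} shows that if some lift $\omega \in \varphi^{-1}(\bar\omega)$ is an even or odd full cyclic permutation ordering then $\bar\omega$ itself is a full cyclic permutation ordering of $\bG$; hence your hypothesis does force \emph{every} edge ordering of $\bG$ to be full cyclic, and for a \emph{simple} $\bG$ Theorem~\ref{denes} finishes immediately, whereas the paper never invokes D\'enes here. The genuine gap is exactly where you flagged it, and both of your proposed patches for the multigraph case fail. Take $n=3$ with $E^+_\G=\bigl\{\{1,2\},\{1,3\},\{2,3\}\bigr\}$ and $E^-_\G=\bigl\{\{1,2\}\bigr\}$, so that $\bG$ has a double edge on $\{1,2\}$ together with $\{1,3\}$ and $\{2,3\}$. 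Placing the two parallel edges adjacently cancels them, but the remaining two edges form a spanning tree, so the resulting product is a $3$-cycle: the cancellation ordering is \emph{full cyclic}, contradicting your claim that it ``prevents a full $n$-cycle.'' Moreover this $\bG$ is connected with even Betti number $\beta(\bG)=2$, so no parity or connectivity argument can detect that it is not a tree; yet a bad ordering must exist, and indeed $(1\ 3)(1\ 2)(2\ 3)(1\ 2)$ fixes the vertex $1$ --- note that here the parallel edges are \emph{interleaved} with other edges, the opposite of your cancellation strategy.

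What the deferred step actually requires is the construction the paper gives: take a minimal cycle $\C$ of $\bG$ (a double edge is a cycle of length $2$, and minimality together with edge multiplicity at most $2$ in $\bG$ guarantees no edge outside $E_\C$ joins $v_1$ to $v_l$), and use the trapping ordering $\omega_0=(c_1,\ldots,c_{l-1},d_1,\ldots,d_t,c_l,b_1,\ldots,b_s)$, where the $b_i$ are the non-cycle edges at $v_l$ and the $d_i$ are all remaining edges. This carries $v_1$ along the cycle to $v_l$, holds it there through the $d_i$, returns it to $v_1$ via $c_l$, and then fixes it through the $b_i$, so $\pi_{\omega_0}(v_1)=v_1$ and $\omega_0$ is not full cyclic; lifting through $\varphi^{-1}$ then concludes, exactly as in your reduction. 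So your plan is completable, but the ``multigraph extension of D\'enes'' you hoped was routine is in fact the entire content of the hard direction: parity and connectivity cannot supply the bad ordering when $\beta(\bG)$ is even, an explicit construction is needed, and once you supply it you have essentially reproduced the paper's proof, with the simple-graph case optionally outsourced to Theorem~\ref{denes}.
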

\begin{proof}
We prove the contraposition. Suppose that $\bG$ is not a tree. Since a signed tree with a full cyclic permutation ordering is connected, we can assume that $\bG$ has a cycle. Let $\C=(V_\C,E_\C)$ denote the minimal cycle (minimal number of vertices) that $\bG$ has, where
\[
V_\C=\{v_1,\ldots,v_l\},\quad E_\C = \{c_1,\ldots,c_l\},
\]
with $c_i \ceq \{v_i,v_{i+1}\}$ ($v_{l+1}=v_1$). Let $I_{\bG\backslash\C}(v_l)$ denote the set of edges of $\bG$ that are incident to the vertex $v_l$ but not contained in $E_\C$. Then we see that there exists no edge in $I_{\bG\backslash\C}(v_l)$ such that it is incident to $v_1$. If $b_0 \in I_{\bG\backslash\C}(v_l)$ is incident to $v_1$, then $\bG$ has a cycle of length $2$ consisting of $b_0$ and $c_l$. We have $l=2$ by minimality of $\mathcal{C}$. Thus there exist three edges between $v_1$ and $v_l$. It  contradicts to the construction of $\bG$.

Define an edge ordering of $\bG$ $\omega_0$ by
\[
\omega_0 \ceq (c_1,\ldots,c_{l-1},d_1,\ldots,d_t,c_l,b_1,\ldots,b_s),
\]
where
\[
I_{\bG\backslash\C}(v_l) = \{b_1,\ldots,b_s\},\quad  E_\G\backslash (I_{\bG\backslash\C}(v_l) \cup E_\C) = \{d_1,\ldots,d_t\}.
\]
Then $E_\G = \{b_1,\ldots,b_s,d_1,\ldots,d_t,c_1,\ldots,c_l\}$. We obtain
\begin{align*}
\pi_{\omega_0}(v_1) 
&= \tau_{b_s}\cdots\tau_{b_1}\tau_{c_l}\tau_{d_t}\cdots\tau_{d_1}\tau_{c_{l-1}}\cdots\tau_{c_1}(v_1)\\
&= \tau_{b_s}\cdots\tau_{b_1}\tau_{c_l}\tau_{d_t}\cdots\tau_{d_1}(v_l)\\
&= \tau_{b_s}\cdots\tau_{b_1}\tau_{c_l}(v_l)\\
&= \tau_{b_s}\cdots\tau_{b_1}(v_1)\\
&= v_1.
\end{align*}
Hence $\omega_0$ is not a full cyclic permutation ordering. Therefore edge orderings in $\varphi^{-1}(\omega_0)$ are neither even nor odd full cyclic permutation orderings of $\G$.
\end{proof}

Now, we show $\eqref{t312} \Rightarrow \eqref{t311}$ by induction on the number of vertices $n$. We can suppose that $n \geq 2$. Suppose that $\bG$ is a tree and $|L_\G|$ is even (\resp odd). Let $\omega=(e_1,\ldots,e_m)$ be an edge ordering of $\G$.
Assume that $e_1$ is a loop. Since $\G$ is a connected graph with at least two vertices, there exists an edge of $\G$ which is not a loop. Let  $\omega'=(e_{k},\ldots,e_m,e_1,\ldots,e_{k-1})$ denote an edge ordering of $\G$ such that $e_k$ is not a loop. Since
\[
\pi_{\omega'}= \tau_{e_{k-1}}\cdots\tau_{e_1}\tau_{e_m}\cdots\tau_{e_{k}} =
(\tau_{e_{k-1}}\cdots\tau_{e_1})\pi_{\omega}(\tau_{e_{k-1}}\cdots\tau_{e_1})^{-1},
\]
that is, $\pi_\omega$ and $\pi_{\omega'}$ are conjugate, $\omega$ is a full cyclic permutation ordering if and only if $\omega'$ is a full cyclic permutation ordering.
Hence we can assume $e_1$ is not a loop. 

Let $e_1=\{i_1,j_1\}$ and $\G\backslash{e_1}$ denote the graph excluding the edge $e_1$ from $\G$. The graph $\G\backslash{e_1}$ has two connected components $\T_1$ and $\T_2$ which are signed trees with some loops. We assume that $\T_1$ includes $i_1$ and $\T_2$ includes $j_1$. Let $\omega_1=(c_{1},\ldots,c_{p})$  and $\omega_2=(d_{1},\ldots,d_{q})$ denote the edge orderings of $\T_1$ and $\T_2$ which are obtained by restricting $\omega$ to $E_{\T_1}$ and $E_{\T_2}$, where $p+q=m-1$. 
Since $\omega_1$ and $\omega_2$ are full cyclic permutation orderings by induction hypothesis, we can write $\pi_{\omega_1}$ and $\pi_{\omega_2}$ as
\begin{align*}
\pi_{\omega_1} &= \nu_1\cdots\nu_{s}\bigl(i_1\ i_2\ \cdots\cdots\ i_l\bigr)_+,\\
\pi_{\omega_2} &= \mu_{1}\cdots\mu_{t}\bigl(j_1\ j_2\ \cdots\cdots\ j_k\bigr)_+,
\end{align*}
where $\nu_1,\ldots,\nu_s,\mu_1,\ldots,\mu_t$ are distinct inversion transpositions, $i_1,\ldots,i_l,j_1,\ldots,j_k$ are distinct integers in $[n]$, $s+t \equiv |L_\G| \pmod 2$ and $l+k=n$ by Lemma $\ref{l24}$.

Since the signed transposition corresponding to an edge of $\T_1$ and the signed transposition corresponding to an edge of $\T_2$ are commutative, we obtain 
\begin{align*}
\pi_\omega &= \pi_{\omega_1}\pi_{\omega_2}\tau_{e_1}\\
&= \nu_1\cdots\nu_{s}\mu_{1}\cdots\mu_{t}\bigl(i_1\ i_2\ \cdots\cdots\ i_l\bigr)_+\bigl(j_1\ j_2\ \cdots\cdots\ j_k\bigr)_+(i_1\ \epsilon j_1)\\
&= \nu_1'\cdots\nu_r' \bigl(i_1\ i_2\ \cdots\cdots\ i_l\bigr)_+\bigl(j_1\ j_2\ \cdots\cdots\ j_k\bigr)_+(i_1\  j_1)\\
&= \nu_1'\cdots\nu_r' \bigl(i_1\ j_2\ \cdots\ j_k\ j_1\ i_2\ \cdots\ i_l\bigr)_+,
\end{align*}
where $\nu_1',\ldots,\nu_r'$ are inversion transpositions and $r \equiv |L_\G| \pmod 2$. Thus $\omega$ is an even (\resp odd) full cyclic permutation ordering if $|L_\G|$ is even (\resp odd).

Now, Theorem $\ref{t15}$ is completely proven. \qed

\subsection{The proof of Corollary $\ref{c16}$}

\begin{corollary}[Restatement of Corollary $\ref{c16}$]
\ 
\begin{enumerate}[(A)]
\item The number of representations of an even $n$-cycle by means of a product of $n-1$ transpositions is $n^{n-2}$.\label{A}
\item The number of representations of an odd $n$-cycle by means of a product of $n$ transpositions is $n^n$.\label{B}
\end{enumerate}
\end{corollary}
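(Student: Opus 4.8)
The plan is to reduce each count to the enumeration of suitable signed graphs together with their edge orderings, and then to pass from a global count (over all cycles in a class) to the count for a single cycle using the fact that each class of $n$-cycles is a single conjugacy class of $\mathfrak{H}_n$. Throughout, a \emph{transposition} means a signed transposition, i.e.\ a reflection of $\mathfrak{H}_n$, so that a sequence $(\tau_m,\ldots,\tau_1)$ of transpositions with a prescribed product is precisely an edge ordering $\omega$ of the signed graph $\G$ whose edges are the $\tau_i$, with product $\pi_\omega$. The enumeration then splits cleanly into a ``shape'' step (which signed graphs can occur) and a ``counting'' step.

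For (A), first I would pin down the shape. If $\pi_\omega$ is an even $n$-cycle with $|E_\G|=n-1$, then Theorem $\ref{nsc}$ gives that $\bG$ has a full cyclic permutation ordering, hence is connected, and that $|L_\G|$ is even; since a connected graph on $n$ vertices has $|E_{\bG}|=(n-1)-|L_\G|\ge n-1$, we are forced to $|L_\G|=0$ and $\bG$ a tree. Conversely, by Theorem $\ref{t2}$ every edge ordering of such a loopless signed tree produces an even $n$-cycle. The loopless signed trees on $[n]$ number $n^{n-2}\cdot 2^{n-1}$ (Cayley's formula for the tree, times a sign $\pm$ on each of the $n-1$ edges), and each admits $(n-1)!$ orderings, so the total number of factorizations, with product ranging over all even $n$-cycles, is $n^{n-2}\,2^{n-1}\,(n-1)!$. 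Dividing by the number of even $n$-cycles, computed below to be $2^{n-1}(n-1)!$, yields $n^{n-2}$.

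For (B), Corollary $\ref{ca}$ already supplies the shape: an odd full cyclic ordering with $|E_\G|=n$ forces $\bG$ to be a tree with $|L_\G|=1$, and conversely Theorem $\ref{t2}$ makes every ordering of such a graph an odd $n$-cycle. These signed graphs number $n^{n-2}\cdot 2^{n-1}\cdot n$ (a tree, a sign on each tree edge, and a choice of vertex for the single loop), each with $n!$ orderings, giving $n^{n-2}\,2^{n-1}\,n\,n!$ factorizations in total. Since the odd $n$-cycles again number $2^{n-1}(n-1)!$, division gives $n^{n-2}\cdot n\cdot n!/(n-1)!=n^n$.

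Two ingredients remain, and they are where the care lies. First, the count of even (\resp odd) $n$-cycles: I would parameterize an even cycle as $\bigl(b_1\ \cdots\ b_n\bigr)_+$ and an odd one as $\bigl(b_1\ \cdots\ b_n\bigr)_-$, where $b_1,\ldots,b_n\in I_n$ have distinct absolute values covering $[n]$, giving $2^n\,n!$ tuples. In the even case two tuples represent the same element exactly under the cyclic rotations of \Cref{C1} together with the simultaneous negation $b_i\mapsto -b_i$; in the odd case exactly under the twisted rotation of \Cref{C1}, which sends $(b_1,\ldots,b_n)$ to $(b_2,\ldots,b_n,-b_1)$. Each identification is a free action of a group of order $2n$, freeness following because the $|b_i|$ are distinct, so either count is $2^n n!/(2n)=2^{n-1}(n-1)!$. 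Second, the passage from the global count to the per-cycle count: since the even (\resp odd) $n$-cycles form the single conjugacy class $((n),0)$ (\resp $(0,(n))$) of $\mathfrak{H}_n$, conjugation by a fixed $g$ carries any factorization of a cycle $c$ into transpositions to a factorization of $gcg^{-1}$, conjugates of reflections being reflections; this is a bijection, so every cycle in the class receives the same number of factorizations and the division is legitimate. I anticipate the main obstacle to be exactly the verification that these identifications are free and exhaustive on cycle representatives, so that both cycle counts are precisely $2^{n-1}(n-1)!$; once that is secured, (A) and (B) follow from the divisions above.
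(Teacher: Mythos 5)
Your proposal is correct, and for part (B) it is essentially the paper's own argument: identify length-$n$ factorization sequences with pairs $(\G,\omega)$ where $\G$ is a signed tree with one loop (via Corollary $\ref{ca}$ and Theorem $\ref{t2}$), count these as $n^{n-2}\cdot 2^{n-1}\cdot n\cdot n!$, and divide by the number $2^{n-1}(n-1)!$ of odd $n$-cycles. The only real difference is in part (A): the paper simply deduces it from D\'enes' Corollary $\ref{c13}$, while you re-run the same global-count-and-divide scheme (loopless signed trees, $n^{n-2}2^{n-1}(n-1)!$ total factorizations, divided by $2^{n-1}(n-1)!$ even $n$-cycles), which is more self-contained in the $\mathfrak{H}_n$ setting since a priori a factorization of an even $n$-cycle could involve negative or inversion transpositions not covered by D\'enes' count in $\mathfrak{S}_n$. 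You also make explicit two points the paper only asserts or uses implicitly: the class sizes $2^{n-1}(n-1)!$ (via the free action of order $2n$ on window notations, equivalently the centralizer computation) and the conjugation bijection showing every cycle in the class $((n),0)$, \resp $(0,(n))$, has the same number of factorizations, which is exactly what legitimizes the division step that the paper performs when it writes $|X|=x\cdot(n-1)!\cdot 2^{n-1}$.
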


We can obtain $\eqref{A}$ as the corollary of Corollary $\ref{c13}$.
Now, we prove $\eqref{B}$. 

Let $x$ be the number of representations of an odd full cyclic permutation by means of a product of $n$ transpositions. Suppose that 
\[
X \ceq \bigset{(\tau_n,\ldots,\tau_1)}{\text{$\tau_i$ is a signed transposition,\ the product $\tau_n\cdots\tau_1$ is an odd full cyclic permutation}}.
\]
Since the number of odd full cyclic permutations is $(n-1)!\cdot 2^{n-1}$, we know $|X|=x\cdot (n-1)!\cdot 2^{n-1}$. 

Define the set consisting of signed graphs by
\begin{align*}
Y \ceq \bigset{\G=(V_\G,E_\G^+,E_G^-,L_\G)}{V_\G=[n],\ |E_\G|=n,\ \text{$\G$ has an odd full cyclic permutation ordering}}
\end{align*}
and define the set consisting of pairs of graphs and edge orderings of them by
\begin{align*}
Z \ceq \bigset{(\G,\omega)}{\G \in Y,\ \text{$\omega$ is an odd full cyclic permutation ordering of $\G$}}.
\end{align*}
Since we have
\[
Y=\bigset{\G=(V_\G,E_\G^+,E_G^-,L_\G)}{V_\G=[n],\ |L_\G|=1,\ \text{$\bG$ is a tree}}
\]
by Corollary $\ref{ca}$, we obtain
\[
Z = \bigset{(\G,\omega)}{\G \in Y,\ \text{$\omega$ is an edge ordering of $\G$}}.
\]

Since $|Y|=n^{n-2}\cdot n\cdot 2^{n-1}$ (Cayley's formula), we get $|Z|=|Y|\cdot n! = n^{n-1}\cdot 2^{n-1}\cdot n!$. Also, since there exists a bijection between $X$ and $Z$, we see $|X|=|Z|$. Hence $x=n^n$.  \qed

\section*{Acknowledgment}
The author wishes to thank everyone who advised for this paper by reading the archives and joining in the discussion.

\bibliographystyle{amsplain}
\bibliography{BCnbibtex}

\end{document}